\documentclass[12pt]{article}
\usepackage{amsmath, amsthm, amssymb,amscd, verbatim,hyperref,mathrsfs,authblk,cleveref, tikz,graphicx}
\usepackage[all]{xy}
\theoremstyle{plain}
\newtheorem{thm}{Theorem}[section]

\newtheorem{lem}[thm]{Lemma}
\newtheorem{cor}[thm]{Corollary}

\newtheorem{prop}[thm]{Proposition}

\theoremstyle{definition}
\newtheorem{defn}[thm]{Definition}

\theoremstyle{remark}

\newcommand{\nc}{\newcommand} 
\nc{\hb}{\mathbb} 
\nc{\M}{\mathcal} 
\nc{\mf}{\mathfrak}
\nc{\mbf}{\mathbf}
\nc{\DMO}{\DeclareMathOperator}

 
\newbox\noforkbox \newdimen\forklinewidth
\forklinewidth=0.3pt
\setbox0\hbox{$\textstyle\smile$}
\setbox1\hbox to \wd0{\hfil\vrule width \forklinewidth depth-2pt
 height 10pt \hfil}
\wd1=0 cm
\setbox\noforkbox\hbox{\lower 2pt\box1\lower 2pt\box0\relax}
\def\anchor{\mathop{\copy\noforkbox}\limits}
 
\setbox0\hbox{$\textstyle\smile$}
\setbox1\hbox to \wd0{\hfil{\sl /\/}\hfil}
\setbox2\hbox to \wd0{\hfil\vrule height 10pt depth -2pt width
               \forklinewidth\hfil}
\wd1=0 cm
\wd2=0 cm
\newbox\doesforkbox
\setbox\doesforkbox\hbox{\box1 \lower 2pt\box2\lower2pt\box0\relax}
\def\nanchor{\mathop{\copy\doesforkbox}\limits}
 
\nc{\cA}{{\M A}} \nc{\cB}{{\M B}} \nc{\cC}{{\M C}} \nc{\cD}{{\M D}}
\nc{\cE}{{\M E}} \nc{\cF}{{\M F}} \nc{\cG}{{\M G}} \nc{\cH}{{\M H}}
\nc{\cI}{{\M I}} \nc{\cJ}{{\M J}} \nc{\cK}{{\M K}} \nc{\cL}{{\M L}}
\nc{\cM}{{\M M}} \nc{\cN}{{\M N}} \nc{\cO}{{\M O}} \nc{\cP}{{\M P}}
\nc{\cQ}{{\M Q}} \nc{\cR}{{\M R}} \nc{\cS}{{\M S}} \nc{\cT}{{\M T}}
\nc{\cU}{{\M U}} \nc{\cV}{{\M V}} \nc{\cW}{{\M W}} \nc{\cX}{{\M X}}
\nc{\cY}{{\M Y}} \nc{\cZ}{{\M Z}}
\nc{\Aa}{{\hb A}} \nc{\Cc}{{\hb C}} \nc{\Gg}{{\hb G}}
\nc{\Nn}{{\hb N}} \nc{\Pp}{{\hb P}} 
\nc{\Qq}{{\hb Q}} \nc{\Rr}{{\hb R}} \nc{\Zz}{{\hb Z}}
\nc{\mfa}{{\mf a}} \nc{\mfb}{{\mf b}} \nc{\mfk}{{\mf k}}
\nc{\mfm}{{\mf m}} \nc{\mfp}{{\mf p}} \nc{\mfq}{{\mf q}}
\nc{\mfr}{{\mf r}}
\nc{\fP}{{\mf P}}
\DMO*{\trdeg}{td}
\DMO*{\spec}{Spec}
\DMO*{\fork}{\nanchor}
\DMO*{\dnf}{\anchor}
\DMO{\RU}{RU}
\DMO{\deter}{det}
\DMO{\RM}{RM}
\DMO{\RC}{RC}
\DMO{\Real}{Re}
\DMO{\Imag}{Im}
\DMO{\tr}{tr}
\DMO{\qc}{QC}
\DMO{\Hu}{Hull}
\DMO{\leg}{length}
\DMO{\area}{area}
\DMO{\dia}{diameter}
\DMO{\iso}{Iso}
\DMO{\dis}{dist}
\DMO{\grad}{grad}
\DMO{\vol}{volume}
\DMO{\gra}{grad}
\DMO{\hd}{nbhd}
\DMO{\dv}{div}
\DMO{\Psl}{PSL}
\nc{\Mb}{\mathfrak^{2b/\delta}_{K_x}}
\nc{\Ma}{\mathfrak^{2a/\delta}_{K_x}}
\nc{\dif}{\mathrm{d}}
\nc{\G}{\Gamma}
\nc{\g}{\gamma}
\nc{\D}{\nabla}
\nc{\p}{\partial}
\nc{\DD}{\Delta^2}
\nc{\pp}{\partial^2} 
\nc{\de}{\delta}
\nc{\td}[2]{\trdeg{({#1}/{#2})}}
\nc{\dtd}[2]{\trdeg_{\delta}{({#1}/{#2})}}
\nc{\dspec}[1]{\spec_{\delta}{#1}}
\nc{\ddim}[1]{\dimen_{\delta}{#1}}
\nc{\gens}[1]{\langle {#1} \rangle}        
\nc{\gen}[2]{ {#1} \langle {#2} \rangle } 
\nc{\form}{\Omega}
\nc{\set}[1]{\left\{ {#1} \right\}}
\nc{\mr}{\hat}
\nc{\pr}{\partial}
\nc{\bc}[3]{\cB^{#1}({#2},{#3})=B^{#1}_{#2}(C^{#2}_{#3}(t)+B^{#1}_{#3}(C^{#2}_{#3}(t))} 
\nc{\tuple}[2]{{#1},\ldots,{#2}} \nc{\ptu}[2]{{#1}:\ldots:{#2}}

\nc{\maps}[3]{{#1}\!:\!{#2}\rightarrow{#3}}
\nc{\map}[2]{{#1}\rightarrow {#2}} \nc{\res}[2]{{#1} |_{#2}}
\nc{\imbed}{\hookrightarrow}
\title{The classification of Kleinian groups of Hausdorff dimensions at most one}
\author{Yong Hou\footnote{Supported by Ambrose Monell Fundation} }
\date{}
\affil{Institute for Advanced Study}
\affil{Princeton University}
\begin{document}
\maketitle
\begin{abstract}
In this paper we provide the complete classification of Kleinian group of Hausdorff dimensions less than $1.$ In particular, we prove that every purely loxodromic Kleinian groups of Hausdorff dimension $<1$ is a classical Schottky group. This upper bound is sharp. As an application,  the result of \cite{H} then implies that
every closed Riemann surface is uniformizable by a classical Schottky group. The proof relies on the result of Hou \cite{Hou}, and space of rectifiable $\G$-invariant closed curves.
\end{abstract}
\setcounter{tocdepth}{1}
\section{Introduction and Main Theorem}
We take Kleinian groups to be finitely generated, torsion-free, discrete subgroups of $\text{PSL}(2,\mathbb{C}).$ The main theorem is:
\begin{thm}[Classification]\label{main}
Any purely loxodromic Kleinian group $\G$ with limit set of Hausdorff dimension $<1$ is a classical Schottky group. This bound is sharp.
\end{thm}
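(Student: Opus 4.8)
The plan is to split the proof into two essentially independent parts — first realize $\Gamma$ as a Schottky group, and then, in the substantive step, show that its defining system of curves can be taken to be round circles — with sharpness handled separately via known examples. For the reduction: every nondegenerate connected subset of $\widehat{\mathbb C}$ has Hausdorff dimension at least $1$, so the hypothesis $\dim_H \Lambda(\Gamma) < 1$ forces the limit set $\Lambda(\Gamma)$ to be totally disconnected. A finitely generated, torsion-free, purely loxodromic Kleinian group with totally disconnected limit set is a Schottky group of some rank $g$: it is free of rank $g$, convex cocompact, and $\Omega(\Gamma)/\Gamma$ is a closed Riemann surface of genus $g$ (classical structure theory of function groups). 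The case $g = 1$ is an elementary loxodromic cyclic group, which is trivially classical, so from now on $g \ge 2$ and we fix a free generating set $\gamma_1,\dots,\gamma_g$.

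To make the Schottky system circular, recall that $\Gamma$ is a classical Schottky group precisely when, after possibly replacing the generating set, there exist $2g$ pairwise disjoint round circles $C_1,C_1',\dots,C_g,C_g'$ bounding a common fundamental domain with each $\gamma_i$ carrying the exterior of $C_i$ onto the interior of $C_i'$. I would build such a configuration in three steps. (i) Use $\dim_H \Lambda(\Gamma) < 1$ to show that $\Lambda(\Gamma)$ is uniformly thin near every point — in particular near each attracting/repelling fixed-point pair of a primitive loxodromic element — and choose the generating set so that the relevant fixed-point pairs are well separated in suitable spherical coordinates. (ii) Around each such pair, work within the space of rectifiable $\Gamma$-invariant closed curves in $\Omega(\Gamma)$ that separate the pair from the rest of $\Lambda(\Gamma)$, producing a continuous subfamily that sweeps out an annulus of controlled modulus; the dimension bound is used to keep the $g$ resulting annuli pairwise disjoint and to bound their moduli below. (iii) Pass to an extremal member of each family (equivalently, solve the associated extremal-length/harmonic problem) and apply the result of Hou \cite{Hou} to conclude that the extremal configuration either already consists of disjoint round circles or can be deformed to one without losing disjointness; this realizes $\Gamma$ as a classical Schottky group.

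The crux is steps (ii)--(iii). Being a \emph{classical} Schottky group is a conformally rigid property, while the Hausdorff-dimension hypothesis is a purely metric smallness statement about $\Lambda(\Gamma)$; the bound must therefore be transmitted through a conformally natural object — here the space of rectifiable $\Gamma$-invariant closed curves and its extremal representatives — and one must verify that the extremization, or the final deformation supplied by \cite{Hou}, does not destroy disjointness of the $2g$ boundary curves. This is also the point at which the precise threshold $1$, rather than some smaller universal constant, is forced, and it is the place where the argument is genuinely delicate.

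Finally, to see that the constant $1$ cannot be improved, I would invoke the known constructions of non-classical Schottky groups (Marden, Yamamoto) together with a pinching/degeneration argument that drives the Hausdorff dimension of the limit set down toward $1$ while preserving non-classicality. This produces purely loxodromic Kleinian groups that are not classical Schottky groups and whose limit sets have Hausdorff dimension arbitrarily close to $1$, so the open condition ``$<1$'' in Theorem \ref{main} is optimal.
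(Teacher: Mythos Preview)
Your reduction step is fine and matches the paper: a purely loxodromic, finitely generated, torsion-free Kleinian group with $\dim_H\Lambda<1$ is convex cocompact and free, hence a Schottky group. The difficulty is entirely in the second part, and there your proposal has a genuine gap.

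Steps (ii)--(iii) do not constitute an argument. Extremal-length problems for separating curve families in a planar domain do not, in general, have circular extremizers; you have given no mechanism by which ``pass to an extremal member'' would output round circles, nor any reason why disjointness of the $2g$ curves survives the extremization. More seriously, you invoke the result of Hou \cite{Hou} as if it said something about extremal configurations being circular or deformable to circular. It does not: that theorem is the purely existential statement that \emph{some} $\lambda>0$ works, and it gives no control on the shape of defining curves for an individual group. As written, the threshold $1$ never actually enters your argument --- you assert that it is ``forced'' at the extremal step but do not say how --- so the proof could not possibly distinguish $\dim_H\Lambda<1$ from, say, $\dim_H\Lambda<1.1$.

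The paper's route is quite different and is worth knowing. One sets $H_c=\sup\{\lambda:\text{Theorem \ref{Hou} holds for }\lambda\}$ and argues by contradiction that $H_c\ge 1$. If $H_c<1$, there must be a non-classical Schottky group $\Gamma$ on $\partial\mathfrak{J}^{H_c}_g$ which is a limit of classical $\Gamma_n$ with $\mathfrak{D}_{\Gamma_n}<H_c$. The number $1$ enters through Bowen's theorem: $\mathfrak{D}_\Gamma<1$ is equivalent to the existence of a \emph{rectifiable} $\Gamma$-invariant quasi-circle, and this is what makes the space $\Psi_\Gamma$ of bounded-length quasi-circles (in the Fr\'echet metric) usable. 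One then proves two opposing lemmas: (Open) every quasi-circle of $\Gamma$ has a neighborhood in $\Psi_\Gamma$ consisting entirely of quasi-circles of $\Gamma$; (Singularity) if $\Gamma$ is non-classical, the classical fundamental domains $\mathcal F_n$ must degenerate (tangency, point-degeneration, or collapsing of boundary circles), and each degeneration forces some quasi-circle of $\Gamma$ every neighborhood of which contains a non-quasi-circle. These two lemmas contradict each other, so $\Gamma$ must be classical and $H_c\ge 1$.

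On sharpness: your pinching of non-classical Schottky groups toward dimension $1$ is the wrong direction --- if it worked from below it would contradict the theorem, and from above it proves nothing about the open condition. The paper's sharpness is simpler: a torsion-free cocompact Fuchsian group is purely loxodromic with $\dim_H\Lambda=1$ and is not free, hence not a (classical) Schottky group.
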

We note that by Selberg lemma, torsion-free is not really a restriction, since any finitely generated discrete subgroup of $\text{PSL}(2,\mathbb{C})$ 
has a finite index torsion-free subgroup. \par
As an application, we have the following Corollary \ref{bers}, which is a resolution of a folklore problem of Bers on classical Schottky group uniformization of closed Riemann surface. Corollary \ref{bers} follows from the work of Hou \cite{H}:
\begin{thm}[Hou \cite{H}]\label{AH}
Every closed Riemann surface is uniformizable by a Schottky group of Hausdorff dimension $<1$, i.e. every point in moduli space has a Hausdorff dimension $<1$  fiber in the Schottky space. 
\end{thm}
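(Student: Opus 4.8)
\bigskip
\noindent\textbf{Proof proposal.}
The plan is to work inside the Schottky space and reduce the theorem to the surjectivity of a projection onto moduli space restricted to a \emph{soft} open set, the locus of small critical exponent, and then to produce the required uniformizations by a combination of an explicit boundary construction and a degeneration/propagation argument. First, genus $1$ is immediate: every elliptic curve is $\mathbb{C}^{*}/\langle z\mapsto\lambda z\rangle$ with $|\lambda|\neq 0,1$, a rank‑one loxodromic group whose two‑point limit set has Hausdorff dimension $0$. So fix $g\ge 2$, let $\mathfrak{S}_g$ denote the space of marked rank‑$g$ Schottky (hence convex‑cocompact) groups modulo conjugacy, and let $\pi\colon\mathfrak{S}_g\to\mathcal{M}_g$, $\G\mapsto\Omega(\G)/\G$, be the forgetful map to moduli space. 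By Koebe's retrosection theorem $\pi$ is surjective, and it is a holomorphic submersion, hence open. Writing $\mathfrak{S}_g^{<1}=\{\G\in\mathfrak{S}_g:\dim_H\Lambda(\G)<1\}$, the theorem is exactly the statement that $\pi(\mathfrak{S}_g^{<1})=\mathcal{M}_g$, and the point ``every point of moduli space has a Hausdorff‑dimension‑$<1$ fiber in Schottky space'' is the same assertion read fiberwise.

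The soft inputs are the following. For a convex‑cocompact group $\dim_H\Lambda(\G)=\delta(\G)$, the critical exponent, and by Bowen's formula / thermodynamic formalism $\delta$ depends real‑analytically — in particular continuously — on $\G\in\mathfrak{S}_g$; hence $\mathfrak{S}_g^{<1}$ is open and so is its image $\pi(\mathfrak{S}_g^{<1})\subseteq\mathcal{M}_g$. It is also nonempty in a strong, ``boundary‑filling'' sense: taking $2g$ pairwise disjoint round disks of equal small radius $r$ centered at the $2g$‑th roots of unity, with the standard pairing, yields classical Schottky groups $\G_r$ with $\dim_H\Lambda(\G_r)\to 0$ as $r\to 0$; by instead spreading out $2g$ small, well‑separated round disks one realizes with arbitrarily small Hausdorff dimension every surface in a full neighbourhood, inside $\overline{\mathcal{M}}_g$, of the maximally noded stable curves of handlebody type. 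Thus $\pi(\mathfrak{S}_g^{<1})$ is an open set containing the ``very thin'' part of moduli space coming from pinching a cut system of curves.

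The substantive part is to reach an arbitrary $X$, and in particular a $X$ of bounded geometry (systole $\ge\epsilon_0$), for which no cut system of curves on $X$ is short. Here the plan is: fix a meridian system $\gamma_1,\dots,\gamma_g$ on $X$ (disjoint simple closed curves with planar complement) and consider the plumbing/opening‑of‑nodes family joining $X$ to the noded surface $X_0$ obtained by pinching all $\gamma_i$ — equivalently, a path in $\overline{\mathfrak{S}_g}$ degenerating the Schottky uniformization of $X$ to a boundary group with $g$ accidental parabolics. Near $X_0$ the group lies in the reservoir of the previous paragraph, so $\delta<1$ there; the goal is a \emph{quantitative} continuity of the critical exponent along the plumbing family — controlled, via the collar lemma and a Bromberg‑type (bilipschitz, drilling/filling) estimate on the convex core, by how long the geodesics $\gamma_i$ are forced to be in the convex core, together with a pressure‑function bound turning that geometric control into ``$\delta$ close to the boundary value, hence $<1$''. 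Because the boundary value at $X_0$ is itself $<1$ and because, by Mumford compactness of the thick part, only finitely many local models of ``the rest of the surface'' need to be controlled, one then hopes to propagate the bound along the path back to $X$; when the chosen meridian system does not already do the job for a given thick $X$ one replaces it by a sufficiently ``filling/complicated'' meridian system, whose associated handlebody is long and thin in the relevant combinatorial sense, and runs the same estimate.

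The step I expect to be the main obstacle is precisely this last one: producing, for \emph{every} $X$ — above all for thick $X$ — a handlebody filling whose critical exponent is $<1$, i.e.\ ruling out that the infimum of $\dim_H\Lambda(\G)$ over all Schottky uniformizations $\G$ of $X$ is $\ge 1$. Openness, retrosection, and the explicit boundary reservoir are formal; the hard content is the conversion of a combinatorial ``largeness/complexity'' statement about the filling (long core geodesics, wide Margulis tubes, a tree‑like decomposition of the convex core) into the analytic estimate $\delta<1$ via Bowen's pressure equation, uniformly over the compact thick part of $\mathcal{M}_g$. This is where I expect the bulk of the argument of \cite{H} to be spent, and it is the one place a genuine hyperbolic‑geometric input — as opposed to soft deformation theory — is indispensable.
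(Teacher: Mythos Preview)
This paper does not contain a proof of Theorem~\ref{AH}. The statement is quoted from \cite{H} and used only as a black box: combined with Theorem~\ref{main} it yields Corollary~\ref{bers}. So there is no ``paper's own proof'' against which to compare your proposal.

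As for the proposal itself: it is not a proof but an outline, and you say so yourself. The soft steps you list --- surjectivity of $\pi$ via retrosection, openness of $\mathfrak{S}_g^{<1}$ by real-analyticity of $\delta$, and the reservoir of small-dimension groups near the maximally noded boundary --- are correct and standard. But the entire content of the theorem lies in the step you flag as the ``main obstacle'': showing that \emph{every} closed surface, including those in the thick part of $\mathcal{M}_g$, admits a Schottky uniformization with $\delta<1$. Your proposed mechanism (choose a sufficiently complicated meridian system, then propagate a pressure/critical-exponent bound back from the noded boundary along a plumbing path using drilling/filling-type bilipschitz control) is plausible heuristics, but you have not supplied the actual estimate, and nothing in your sketch forces $\delta$ to stay below $1$ all the way back to a thick $X$. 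Continuity of $\delta$ along a path gives no uniform bound, and Mumford compactness of the thick part does not by itself produce one either: you would still need, for each thick $X$, an explicit handlebody filling with a verified $\delta<1$. That is precisely the theorem, and your outline defers it to \cite{H} rather than proving it.

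In short: there is nothing here to compare, and what you have written is a reasonable roadmap with the decisive step left open, as you acknowledge.
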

\begin{cor}(Uniformization)\label{bers}
Every closed Riemann surface can be uniformized by a classical Schottky group.
\end{cor}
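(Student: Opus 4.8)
The plan is simply to chain Theorem \ref{AH} with Theorem \ref{main}, so the proof will be very short; the work has already been done in those two statements. First I would recall the standard facts about Schottky groups that place them inside the class of groups to which Theorem \ref{main} applies: a Schottky group of genus $g$ is, by construction, a free, finitely generated, discrete, torsion-free, purely loxodromic subgroup of $\mathrm{PSL}(2,\Cc)$ whose region of discontinuity $\Omega(\G)$ is connected with $\Omega(\G)/\G$ a closed Riemann surface of genus $g$; it is \emph{classical} when some free generating set can be realized by a system of $g$ disjoint round circles bounding a common $2g$-holed sphere. In particular every Schottky group is a purely loxodromic Kleinian group in the sense fixed in the Introduction, so Theorem \ref{main} is directly applicable to it.

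Now I would take an arbitrary closed Riemann surface $S$. By Theorem \ref{AH} there exists a Schottky group $\G$ with $S \cong \Omega(\G)/\G$ and with $\dim_H \Lambda(\G) < 1$, where $\Lambda(\G)$ denotes the limit set. Since $\G$ is purely loxodromic and its limit set has Hausdorff dimension $<1$, Theorem \ref{main} forces $\G$ to be a classical Schottky group. Therefore $S$ is uniformized by the classical Schottky group $\G$, which is precisely the assertion of Corollary \ref{bers}.

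I do not expect any genuine obstacle here: the only thing to check is a matter of bookkeeping, namely that "uniformizable by a Schottky group" in Theorem \ref{AH} and "uniformized by a classical Schottky group" in the conclusion refer to the same notion — presenting $S$ as the quotient $\Omega(\G)/\G$ — which is automatic. All of the mathematical content is contained in Theorems \ref{main} and \ref{AH}, and the corollary is their immediate conjunction.
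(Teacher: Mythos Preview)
Your proposal is correct and matches the paper's approach exactly: the paper does not even write out a separate proof, but simply presents Corollary~\ref{bers} as the immediate consequence of combining Theorem~\ref{AH} (existence of a Schottky uniformization with Hausdorff dimension $<1$) with Theorem~\ref{main} (any such group is classical). There is nothing to add beyond the chaining you describe.
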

\subsection{Strategy of proof}
First let us recall the result of Hou \cite{Hou}:
\begin{thm}[Hou]\label{Hou}
There exists $\lambda>0$ such that any Kleinian group with limit set of Hausdorff dimension $<\lambda$ is a classical Schottky group
\end{thm}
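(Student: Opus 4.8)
The plan is to show that when $\lambda$ is small, the Jordan--curve Schottky structure that general theory already supplies for $\G$ can be replaced by one bounded by round circles, i.e.\ a classical Schottky marking.

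\emph{Reductions.} For any non-elementary Kleinian group $\delta(\G)\le\dim_H\Lambda(\G)$, and a parabolic element forces $\delta(\G)\ge\tfrac12$ (the cyclic parabolic subgroup alone has critical exponent $\tfrac12$), so taking $\lambda\le\tfrac12$ already makes $\G$ purely loxodromic. A non-elementary, finitely generated, purely loxodromic group with $\dim_H\Lambda<1$ has a limit set that is neither connected (a connected set with more than one point in the sphere has Hausdorff dimension $\ge 1$) nor finite (limit sets of non-elementary groups are perfect), hence a Cantor set; by Maskit's characterization $\G$ is then a marked Schottky group of some rank $n$, with free generators $g_1,\dots,g_n$ and pairwise disjoint Jordan curves $C_1^{\pm},\dots,C_n^{\pm}$ bounding a common fundamental domain $D$ and satisfying $g_i(C_i^-)=C_i^+$. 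In particular $\G$ is convex cocompact, so $\dim_H\Lambda=\delta(\G)$ exactly and the Patterson--Sullivan measure $\mu$ satisfies $\mu(B(x,r))\asymp r^{\delta}$ on $\Lambda$. It remains to make the $C_i^{\pm}$ round.

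\emph{Construction.} Pass to a geometrically optimal marking: among all Nielsen-equivalent free generating sets, and after a M\"obius conjugation, choose one that makes the isometric circles of $g_1^{\pm1},\dots,g_n^{\pm1}$ as large and spread out as possible (e.g.\ minimizing $\sum_i\ell(g_i)$, then unfolding). Smallness of $\delta$ then controls the Cantor set sharply: $\mu(B(x,r))\asymp r^{\delta}$ together with the contraction bounds $\operatorname{diam}(wD\cap\Lambda)\lesssim e^{-c|w|}$ shows that the $2n$ first-level pieces $\Lambda_i^{\pm}:=\Lambda\cap\operatorname{int}C_i^{\pm}$ have diameters that are tiny compared with their mutual distances, the ratio tending to $0$ as $\lambda\to0$. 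Enclose each $\Lambda_i^{+}$ in a round disk $B_i^{+}$ running through the surrounding gap, and set $B_i^{-}:=g_i^{-1}(B_i^{+})$, again a round disk since M\"obius maps preserve circles --- which is exactly why roundness is the right target. One then checks: (i) $g_i^{-1}$ distorts $B_i^{+}$ only boundedly relative to $\Lambda_i^{-}$, so $B_i^{-}$ is still a tight neighbourhood of $\Lambda_i^{-}$; (ii) tightness and the gap estimate force $B_1^{\pm},\dots,B_n^{\pm}$ to be pairwise disjoint; (iii) disjointness together with $g_i(\partial B_i^-)=\partial B_i^+$ exhibits $\{B_i^{\pm}\}$ as a classical Schottky system for $\G$.

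\emph{Main obstacle.} Steps (i)--(ii) are the delicate ones: a single generator may have enormous metric distortion, so ``tight around $\Lambda_i^{+}$'' need not survive pushforward by $g_i^{-1}$, and a priori some pair of pieces could resist round separation in every marking. To extract a uniform threshold I would argue by contradiction and localize to rank two: given $\G_k$ with $\delta(\G_k)\to0$ and no round configuration, after conjugation two pieces of $\G_k$ stay interlocked for some pair of generators, whose rank-two subgroup $H_k$ is then a normalized purely loxodromic free non-classical-Schottky group with $\delta(H_k)\le\delta(\G_k)\to0$; but the set of normalized rank-two purely loxodromic free Kleinian groups that are not classical Schottky is closed and avoids the degenerate locus $\{\delta=0\}$, so $\delta$ attains a positive minimum $\lambda_0$ there by compactness modulo conjugacy --- contradicting $\delta(\G_k)<\lambda:=\lambda_0$ for $k$ large. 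The part I expect to be genuinely hard is the reassembly: showing that in the optimal marking the pairwise round disks can be taken to form one globally disjoint family, which again rests on sparseness of $\Lambda$ and on extremality of the chosen marking. This argument yields only \emph{some} $\lambda>0$, which is all Theorem~\ref{Hou} asserts; the sharp value $\lambda=1$ is obtained later, in Theorem~\ref{main}, through the finer analysis via the space of rectifiable $\G$-invariant closed curves.
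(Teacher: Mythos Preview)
The paper does not prove Theorem~\ref{Hou}; it is quoted from the earlier papers \cite{Hou1,Hou} and serves here only as the input defining $H_c$. So there is no proof in the present paper to compare your sketch against, and your remark that ``the sharp value $\lambda=1$ is obtained later, in Theorem~\ref{main}'' is the accurate description of how this paper relates to the cited result.

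On the merits of your sketch: the reductions in the first paragraph (ruling out parabolics via $\delta\ge\tfrac12$, Cantor limit set, Maskit's characterization) are correct and standard. The genuine gap is the compactness step you invoke at the end. The space of normalized rank-two Schottky groups is \emph{not} compact: as one moves toward the boundary, translation lengths may tend to $0$ or to $\infty$ and fixed points may collide, and it is precisely along such degenerating sequences that $\delta\to 0$ can occur while the groups remain non-classical for as long as you like. So the assertion ``$\delta$ attains a positive minimum $\lambda_0$ there by compactness modulo conjugacy'' is not a tool available to you---it is essentially a restatement of the theorem. The actual argument in \cite{Hou1,Hou} is long exactly because it must control those degenerations.

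Your localization to rank two has a second, independent gap. If the round disks $B_i^{\pm},B_j^{\pm}$ produced by your construction happen to intersect, this does not force the subgroup $\langle g_i,g_j\rangle$ to be non-classical: that subgroup may well be classical with respect to a different pair of circles, or even a different free basis obtained by a Nielsen move. The obstruction to a global round configuration can live in the interaction of all $n$ generators simultaneously, and cannot in general be pinned on any single pair.
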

Define $H_c=\sup\{\lambda|\text{satisfies Theorem \ref{Hou}}\}.$ $H_c$ is the maximal parameter such that if $\G$ is a Schottky group of Hausdorff dimension $<H_c$ then $\G$ is classical Schottky group. Hence Theorem \ref{main} can be rephrased as:
$H_c\ge 1.$

We prove by contradiction, so from now on and throughout the paper we assume that $H_c<1,$ then we will show that $H_c$ is not maximal.\par

Recall that the Hausdorff dimension function on the
Schottky space of rank $g$ is real analytic. 
It is a consequence of Theorem \ref{Hou} that, $\mathfrak{J}^{H_c}_{g}$ 
(rank-$g$ Schottky groups of Hausdorff dimension $<{H_c}$, see Section $2$) is a
$3g-3$ dimensional open and connected submanifold of $\mathfrak{J}_{g}$, the rank-$g$ Schottky space. \par
The proof is done as follows. First we note that, $\partial\mathfrak{J}_{g}^{H_c}$ must contain a non-classical Schottky group, otherwise $H_c$ is not maximal by definition, see Proposition \ref{H}. Second we show that, 
 if $H_c<1$ then every element of the boundary $\partial\mathfrak{J}_{g}^{H_c}$ is either a classical schottky group or, it is not a Schottky group, Lemma \ref{singular}. This contradicts the first fact, hence we must have $H_c\ge 1.$ The bulk of the paper is devoted to proof the second fact, which we now summarize the idea in the following.\par
It is a result of Bowen \cite{Bowen} that, a Schottky group $\G$ has Hausdorff dimension $<1,$ if and only if there exist a \emph{rectifiable} $\G$-invariant closed curve. Let $\mathscr{R}(S^1,W)$ be the space of bounded length closed curves which intersects the compact set $W\subset\mathbb{C}$ and equipped with Fr\'echet metric. It is complete space, see Section $2$. 
We show that if $H_c<1$ then, every quasi-circle with bounded length of $\G$ is the limit of a sequence of quasi-circles of $\G_n$ in $\mathscr{R}(S^1,W).$ We also show that 
if $\G$ is a Schottky group, then every quasi-circle of $\G$ has an open neighborhood in the relative topology of $\Psi_\G$ (see section 3) such that, every element of the open neighborhood is a quasi-circle  of $\G.$ We also define linearity and transversality invariant for quasi-circles, and show that quasi-circles of classical Schottky groups preserve these invariants, and non-classical Schottky groups do not have transverse linear quasi-circles.\par
Given a quasi-circle of a Schottky group $\G$, we show that there exists an open neighborhood (in the relative topology of space of rectifiable curves with respect to Frechet metric) about the quasi-circle such that, every point in the open neighborhood is a quasi-circle of $\G,$ see Lemma \ref{open}. Next assume that we have a sequence of
classical Schottky groups $\G_n\to\G$ to a Schottky group, and are all of Hasudorff dimensions less than one. 
We then study singularity formations of classical fundamental domains of $\G_n$ when $\G_n\to\G.$ These singularities are of three types: tangent, degenerate,
and collapsing. We show that all these singularities will imply that there exists a quasi-circle such that, every open neighborhood about this quasi-circle will contain some points which is \emph{not} a quasi-circle. Essentially, the existence of a singularity will be \emph{obstruction} to the existence of any open neighborhood that are of quasi-circles, 
see Lemma \ref{singular}. Hence it follows from these results that, if $\G_n\to\G$ with $\G_n$ classical and, all Hausdorff dimensions are of less than one then $\G$ must be a classical Schottky group.
\centerline{Acknowledgement}\par
This work is made possible by unwavering supports and insightful conversations from Peter Sarnak, whom I'm greatly indebted to. It is the groundbreaking works of Peter Sarnak that has guided the author to study this problem at first place. I wish to express my deepest gratitude and sincere appreciation to Dave Gabai for the continuous of amazing supports and encouragements which allowed me to complete this work.\par
I want to express my sincere appreciation to the referee for detailed reading and, helpful comments and suggestions. I also want express sincere appreciation to Ian Agol, Matthew de Courcy-Irland, for reading of the previous draft. \par
This paper is dedicated to my father: ShuYing Hou.
\section{Quasi-circles and generating Jordan curves }
Schottky group $\G$ of rank $g$  is defined as convex-cocompact discrete faithful representation of the free group $\mathbb{F}_g$ in $\text{PSL}(2,\mathbb{C}).$ It follows that 
$\G$ is freely generated by purely loxodromic elements $\{\g_i\}_1^g$. This implies we can find collection of 
open topological disks $ D_{i}, D_{i+g}, 1\le i\le g$ of disjoint closure $\bar D_i\cap\bar D_{i+g}=\emptyset$ in the Riemann sphere $\partial\mathbb{H}^3=\overline{\mathbb{C}}$ with
boundary curves $\partial\bar D_{i}=c_{i},\partial \bar D_{i+g}=c_{i+g}.$ By definition $c_{i},c_{i+g}$ are closed Jordan curves in Riemann sphere 
$\partial\mathbb{H}^3,$ such that $\g_i(c_{i})=c_{i+g}$ and $\g_i(D_{i})\cap D_{i+g}=\emptyset.$ Whenever there exists a
set $\{\g_1,...,\g_g\}$ of generators with all $\{c_{i},c_{i+g}\}_1^g$ as circles, then it is 
called a classical Schottky group with $\{\g_1,...,\g_g\}$ classical generators.\par
Schottky space $\mathfrak{J}_g$ is defined as space of all rank $g$ Schottky groups up to conjugacy by $\text{PSL}(2,\mathbb{C}).$ By normalization,
we can chart $\mathfrak{J}_g$ by $3g-3$ complex parameters. Hence $\mathfrak{J}_g$ is $3g-3$ dimensional complex manifold. The bihomolomorphic 
$\text{Auto}(\mathfrak{J}_g)$ group is $\text{Out}(\mathbb{F}_g),$ which is isomorphic to quotient of the handle-body group. Denote by $\mathfrak{J}_{g,o}$ the set of all elements of $\mathfrak{J}_g$ that are classical Schottky groups. Note that $\mathfrak{J}_{g,o}$ is open in $\mathfrak{J}_g.$ On the other hand it is nontrivial result due to Marden that $\mathfrak{J}_{g,o}$ is non-dense subset of $\mathfrak{J}_g.$ However, it follows from
Theorem \ref{Hou}, $\mathfrak{J}^\lambda_{g}\subset\mathfrak{J}_{g,o}$ is $3g-3$ dimensional open connected submanifold. Here  $\mathfrak{J}^\lambda_{g}$ denotes space of Schottky groups of Hausdorff dimension $<\lambda.$\par

\noindent{\bf  Some notations:} 
\begin{itemize}

\item
Given $\G$ a Kleinian group, we denote by $\Lambda_\G$ and $\Omega_\G$ and $\mathfrak{D}_\G$ its limit set, region of discontinuity,
and Hausdorff dimension respectively throughout this paper. 
\item
Given a fundamental domain $\mathcal{F}$ of $\G$, we denote the orbit of $\mathcal{F}$ under actions of $\G$ by $\mathcal{F}_\G.$ We also say $\mathcal{F}_\G$
is a classical fundamental domain of classical Schottky group if $\partial\mathcal{F}$ are disjoint circles.
\end{itemize}
\begin{defn}[Quasi-circles]
Given a geometrically finite Kleinian group $\G,$ a closed $\G$-invariant Jordan curve that contains the limit set  $\Lambda_\G$ is called \emph{quasi-circle} of $\G$.
\end{defn}
\noindent{\bf Remark:}
From now on, we make the \emph{global assumption throughout this paper} that $H_c<1$, and \emph{all Schottky groups} are of Hausdorff dimension $\mathfrak{D}_\G\le H_c,$ if not stated otherwise. \par
Next we give a construction of quasi-circles of $\G$ which is a generalization of the construction by Bowen \cite{Bowen}. \par
Let $\mathcal{F}$ be a fundamental domain of $\G$, and $\{c_i\}_1^{2g}$ be the collection of $2g$ disjoint Jordan curves comprising $\partial\mathcal{F}.$  Let $\zeta$ denote collection of arcs $\zeta=\{\zeta_i\}$ connecting points $p_i\in c_i,p'_i\in c_{i+g}$  for $1\le i\le g,$ and 
arcs on $c_{i+g}$ that connects $p_{i+g}$ to $\g_i(p_i)$ and $p'_{i+g}\in c_{i+g}$ to $\g_i(p'_i).$
So $\zeta$ is a set of $g$ disjoint curves connecting disjoint points on collection of Jordan curves of $\partial\mathcal{F}$ (Figure 1 ). \par
\begin{figure}[ht!]
\centering
\includegraphics[width=45mm]{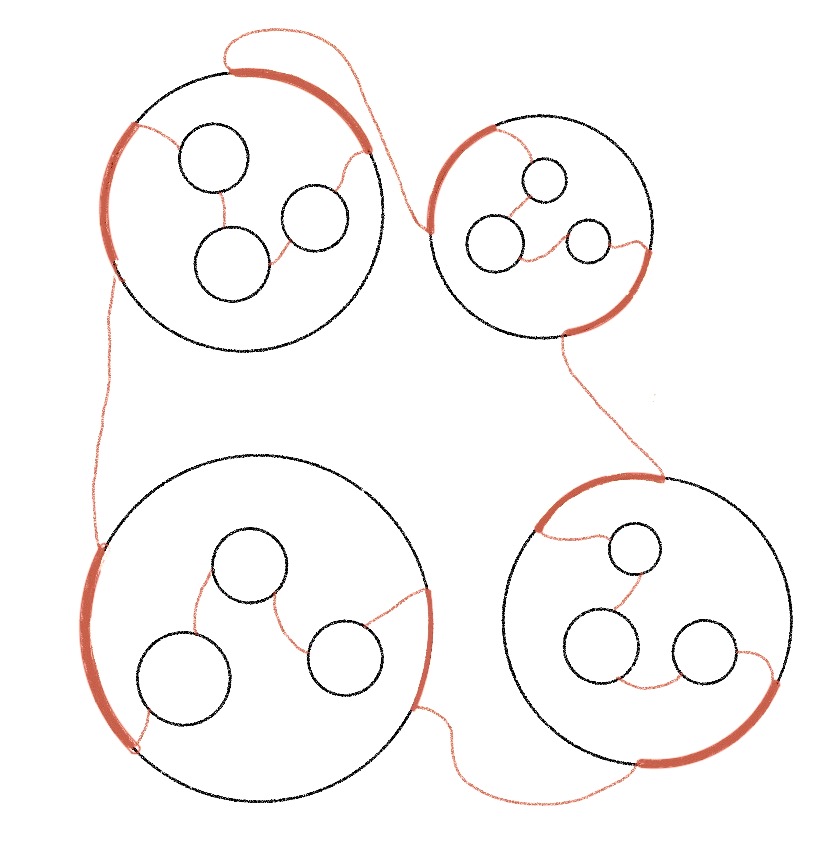}  
\caption{Quasi-circle \label{overflow}}
\end{figure}
$\eta_\G=\Lambda_\G\cup\cup_{\g\in\G}\g(\zeta)$ defines a $\G$-invariant closed curve containing $\Lambda_\G.$ 
$\eta_\G$ defines a quasi-circle of $\G$. Obviously there are infinitely many quasi-circles and different $\zeta$ gives a different quasi-circles. Note that, the simply connected regions $\mathbb{C}\setminus\eta_\G$ gives the Bers simultaneous uniformization of Riemann surface $\Omega_\G/\G.$

\begin{defn}[Generating curve]\label{marking}
Given a quasi-circle $\eta_\G$ of $\G.$ We say a collection of disjoint curves $\zeta$ is a generating curve of $\eta_\G$, if $\eta_\G$ can be generated by $\zeta.$
\end{defn}

Note that the quasi-circles constructed in \cite{Bowen}, which requires that $p_{i+g}$ is a imagine of $p_i$ under element of $\g_i$, is a subset of the collection that we have defined here. In fact, this generalization is also used for the construction of quasi-circles of non-classical Schottky groups. 

\begin{prop}\label{marking}
Every quasi-circle $\eta_\G$ of $\G$ is generated by some generating curves $\zeta.$
\end{prop}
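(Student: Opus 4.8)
The plan is to reverse the construction of $\eta_\G$ given just above the statement: given an arbitrary quasi-circle $\eta_\G$, I would produce, from the data of a fundamental domain $\cF$ with boundary Jordan curves $\{c_i\}_1^{2g}$, a collection of arcs $\zeta = \{\zeta_i\}$ so that re-running the construction $\Lambda_\G \cup \bigcup_{\g\in\G}\g(\zeta)$ recovers exactly $\eta_\G$. The first step is to intersect $\eta_\G$ with the closure of $\cF$. Since $\eta_\G$ is a $\G$-invariant Jordan curve containing $\Lambda_\G$, and $\cF_\G$ tiles $\Omega_\G$, the set $\eta_\G \cap \overline{\cF}$ is a union of: (a) the part of $\Lambda_\G$ meeting $\overline{\cF}$, and (b) finitely many arcs of $\eta_\G \setminus \Lambda_\G$ lying in $\overline{\cF}$, each of which must have both endpoints on $\partial\cF = \bigcup c_i$ (an endpoint in the interior would contradict $\eta_\G$ being a curve that leaves $\cF$, or would force the curve to terminate; an endpoint in $\Lambda_\G$ is handled by the closure). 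These arcs are the candidate pieces of $\zeta$, together with the portions of $\eta_\G$ running along the curves $c_{i+g}$ connecting the relevant boundary points.

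The second step is a counting/combinatorial argument to show that, after discarding redundancy coming from $\G$-invariance, exactly one arc $\zeta_i$ survives per generator pair $(c_i, c_{i+g})$ connecting a point $p_i \in c_i$ to a point $p_i' \in c_{i+g}$, matching the shape of the generating data in Definition \ref{marking}. Here I would use that $\Omega_\G/\G$ is a genus-zero surface with the images of the $c_i$ forming a pants-type decomposition, and that $\eta_\G/\G$ (rather, the image of $\eta_\G \setminus \Lambda_\G$) is a finite graph on this surface whose complement gives the two Bers components; a genus and Euler-characteristic bookkeeping pins down the number of arcs. The arcs along $c_{i+g}$ joining $p_{i+g}, p_{i+g}'$ to $\g_i(p_i), \g_i(p_i')$ are then forced by $\G$-invariance: $\g_i$ must carry the endpoints of $\zeta_i$ on $c_i$ to points of $\eta_\G$ on $c_{i+g}$, and the only way $\eta_\G$ can be a single closed curve is for these to be connected up along $c_{i+g}$ exactly as in the construction. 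Finally I would verify that the $\zeta_i$ so produced are pairwise disjoint (immediate, since they are subarcs of the embedded curve $\eta_\G$ lying in disjoint regions $\overline{\cF}$ modulo the group) and that the generated curve is precisely $\eta_\G$ (both are $\G$-invariant, agree on $\overline{\cF}$, hence agree everywhere).

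I expect the main obstacle to be the combinatorial step: controlling the global topology of $\eta_\G \cap \overline{\cF}$ and showing no "extra" arcs occur — a priori $\eta_\G$ could wander in and out of $\cF$ many times, and one must rule this out (or absorb the extra crossings into a choice of different fundamental domain / isotopy of $\zeta$) to match the rigid template of Definition \ref{marking}. The key input is that $\eta_\G$ is a \emph{Jordan} curve (embedded, not just immersed) and $\G$-invariant, so its intersection pattern with the tiling $\cF_\G$ is highly constrained; I would lean on the fact that $\mathbb{C}\setminus\eta_\G$ has exactly two components, each simply connected and $\G$-invariant, giving the Bers simultaneous uniformization, which forces $\eta_\G/\G$ to be a single essential separating curve-system on $\Omega_\G/\G$ — and that rigidity is what collapses the possibilities down to the stated generating data. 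A minor secondary point is choosing the base fundamental domain $\cF$ compatibly; since the statement only asks for the existence of \emph{some} $\zeta$, I am free to pick $\cF$ conveniently, which removes that difficulty.
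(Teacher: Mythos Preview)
Your first step is exactly the paper's proof: pick a fundamental domain $\mathcal{F}$, set $\xi=\mathcal{F}\cap\eta_\G$ and $\bar\xi=\partial\mathcal{F}\cap\eta_\G$, observe that distinct $\G$-translates of $\xi$ are disjoint, and conclude $\eta_\G=\Lambda_\G\cup\G(\xi\cup\bar\xi)$. The paper stops there and declares $\zeta=\xi\cup\bar\xi$ to be the generating curve.

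Your entire second step, the combinatorial reduction to exactly one arc per generator pair with the specific endpoint pattern $p_i,p_i',\g_i(p_i),\g_i(p_i')$, is not carried out in the paper and is not needed. You are reading Definition~\ref{marking} as requiring the rigid $g$-arc template from the construction preceding it, but the paper treats ``generating curve'' simply as any collection of disjoint arcs $\zeta$ in $\overline{\mathcal{F}}$ for which $\eta_\G=\Lambda_\G\cup\bigcup_{\g\in\G}\g(\zeta)$. Under that reading the intersection $\xi\cup\bar\xi$ already qualifies, with no control on the number of components. So the ``main obstacle'' you flag, ruling out extra crossings of $\eta_\G$ with $\partial\mathcal{F}$, is a phantom: the paper absorbs any such extra arcs into $\zeta$ rather than eliminating them. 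Your Euler-characteristic bookkeeping would give a sharper structural statement than what is actually proved or used.
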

\begin{proof}
Let $\eta_\G$ be a quasi-circle of $\G.$ Let $\mathcal{F}$ be a fundamental domain of $\G.$ Set $\xi=\mathcal{F}\cap\eta_\G$ and 
$\bar\xi=\partial\mathcal{F}\cap\eta_\G.$ Then $\xi,\bar\xi$ consists of collection of disjoint curves which only intersects along $\partial\mathcal{F}.$
Hence we have, $\g(\xi)\cap\g'(\xi)=\emptyset$ for $\g,\g'\in\G$ with $\g\not=\g'.$ Since $\overline{\G(\xi)}-\G(\xi)=\Lambda_\G$ we have have 
$\eta_\G=\Lambda_\G\cup\G(\xi\cup\bar\xi),$ hence $\xi\cup\bar\xi$ is a generating curve of $\eta_\G.$

\end{proof}

\begin{defn}[Linear quasi-circle]
We call a quasi-circle $\eta_\G$ \emph{linear} if, 
 $\eta_\G\backslash\Lambda_\G$ consists of points, circular arcs or  lines.
\end{defn}

Note that, if $\eta_\G$ is linear then there exists  $\mathcal{F}_\G$ such that, $\eta_\G\cap \mathcal{F}_\G$ and $\eta_\G\cap \partial\mathcal{F}_\G$ 
are piece-wise circular arcs or lines.\par
We say an arc $\zeta\subset\eta_\G\cap \mathcal{F}_\G$
is orthogonal if the tangents at intersections on $\partial\mathcal{F}_\G$
are orthogonal with $\partial\mathcal{F}_\G,$ and an arc $\xi\subset\eta_\G\cap \partial\mathcal{F}_\G$ is parallel if $\xi\subset\partial\mathcal{F}_\G.$

\begin{defn}[Right-angled quasi-circle]
Given a linear quasi-circle $\eta_\G$ of $\G$, if all linear arcs intersect at right-angle then we say $\eta_\G$ is \emph{right-angled quasi-circle}.
\end{defn}

\begin{defn}[Transverse quasi-circle]
Given a quasi-circle $\eta_\G$ of $\G$, we say $\eta_\G$ is \emph{transverse} quasi-circle if $\eta_\G$ intersects $\partial\mathcal{F}_\G$ orthogonally for some 
$\mathcal{F}_\G$ and, $\eta_\G$ have no parallel arc. Otherwise, we say $\eta_\G$ is non-transverse.

\end{defn}

\begin{defn}[Parallel quasi-circle]
Given a quasi-circle $\eta_\G$ of $\G$, we say $\eta_\G$ is \emph{parallel} quasi-circle if there exists some arc $\eta$ of $\eta_\G$ such that 
$\eta\subset\partial\mathcal{F}_\G$

\end{defn}

\begin{prop}
Transverse quasi-circles always exists for a given Schottky group $\G.$ 
\end{prop}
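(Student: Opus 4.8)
The plan is to produce, for any rank-$g$ Schottky group $\G$, an explicit transverse quasi-circle by choosing a suitably symmetric fundamental domain and an orthogonal generating curve on it. First I would fix a fundamental domain $\mathcal{F}$ bounded by $2g$ disjoint Jordan curves $\{c_i\}_1^{2g}$ with $\g_i(c_i)=c_{i+g}$. The key point is that I am free to choose $\mathcal{F}$: by moving the boundary curves slightly inside the region of discontinuity (they need only be disjoint Jordan curves with the pairing property, not circles) I may assume each $c_i$ is smooth, so that the notion of a curve meeting $\partial\mathcal{F}_\G$ orthogonally makes sense. On each pair $(c_i,c_{i+g})$ I then pick a point $p_i\in c_i$ and set $p_i'=\g_i(p_i)\in c_{i+g}$, and I draw an arc $\zeta_i\subset\mathcal{F}$ from $p_i$ to $p_i'$ that leaves $c_i$ and arrives at $c_{i+g}$ orthogonally; such an arc exists because $\mathcal{F}$ is connected and one can always realize prescribed orthogonal boundary data by a smooth embedded arc. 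Because $p_i'$ is literally the $\g_i$-image of $p_i$, no auxiliary connecting arcs on $c_{i+g}$ are needed, and $\zeta=\{\zeta_i\}_1^g$ is a set of $g$ disjoint arcs as in the construction preceding Proposition~\ref{marking}.

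Next I would invoke that construction to form $\eta_\G=\Lambda_\G\cup\bigcup_{\g\in\G}\g(\zeta)$, which is a $\G$-invariant closed Jordan curve containing $\Lambda_\G$, i.e. a quasi-circle of $\G$. By construction $\eta_\G\cap\mathcal{F}=\bigcup\zeta_i$ meets $\partial\mathcal{F}$ only at the $2g$ points $p_i,p_i'$, and it does so orthogonally; translating by $\G$, the curve $\eta_\G$ meets $\partial\mathcal{F}_\G$ orthogonally everywhere it meets it. Moreover $\eta_\G\cap\partial\mathcal{F}_\G$ is the discrete orbit of the points $\{p_i\}$, hence contains no arc lying inside $\partial\mathcal{F}_\G$; so there is no parallel arc. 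Together these two facts say precisely that $\eta_\G$ is transverse in the sense of the definition above. Finally I would note that $\eta_\G$ is genuinely a quasi-circle in the required sense: it is a closed Jordan curve (the orbit arcs accumulate only on $\Lambda_\G$, which is what makes the union closed and simple) and it is $\G$-invariant and contains $\Lambda_\G$, exactly as in the displayed construction of $\eta_\G$.

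The main obstacle is the transversality bookkeeping at the boundary: I must make sure that after choosing $\zeta_i$ orthogonal to $c_i$ at $p_i$ and to $c_{i+g}$ at $p_i'$, the translated pieces fit together across $\partial\mathcal{F}_\G$ without creating a tangency or a stray arc along some $\g(c_j)$. This is where the freedom to choose $p_i$ generically on $c_i$ matters — one avoids the (measure-zero) bad choices for which an orbit point of $\zeta$ happens to run along a boundary curve — and where one uses that the $c_i$ have pairwise disjoint closures, so the only identifications on $\partial\mathcal{F}_\G$ are the prescribed ones $c_i\leftrightarrow c_{i+g}$. A secondary point to check is that $\eta_\G$ is embedded: distinct translates $\g(\zeta)$ and $\g'(\zeta)$ lie in distinct translates of $\mathcal{F}$ and meet at most at shared boundary points, exactly as in the proof of Proposition~\ref{marking}, so no self-intersection occurs. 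With these verifications the proposition follows.
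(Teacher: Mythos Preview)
Your proposal is correct and follows essentially the same approach as the paper: choose a fundamental domain, pick paired boundary points $p_i\in c_i$ and $p_i'=\g_i(p_i)\in c_{i+g}$, and join them by arcs in $\mathcal{F}$ meeting $\partial\mathcal{F}$ orthogonally, so that the resulting $\eta_\G$ has no parallel arc. The paper's own proof is a one-sentence sketch of exactly this construction; your additional bookkeeping (smoothing the $c_i$, embeddedness of $\eta_\G$, absence of parallel arcs under the orbit) simply fills in details the paper leaves implicit.
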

\begin{proof}
Let $\mathcal{F}$ be bounded by $2g$ distinct Jordan closed curves and take any curve connecting $p_i\in c_i,p_{i+g}\in c_{i+g}$ such that $p_{i+g}=\g_i(p_i)$ and $p_{i+g}\in c_{i+g}=\g_i(p_{i+g})$ for $1\le i\le g,$ that intersects $c_i,c_{i+g}$
orthogonally.
\end{proof}

It should be noted that a quasi-circle of $\G$ in general is not necessarily \emph{rectifiable}. For instance, if we take $\zeta$ to be some non-rectifiable generating curves then,
$\eta_\G$ will be non-rectifiable. Recall a curve is said to be rectifiable if and only if the $1$-dimensional Hausdorff measure of the curve is finite.
This is not the only obstruction to rectifiability, in fact we have the following result of Bowen:

\begin{thm}[\cite{Bowen}]\label{Bowen}
For a given Schottky group $\G,$ the Hausdorff dimension of limit set is $\mathfrak{D}_\G<1,$ if and only if there exists a rectifiable quasi-circle for $\G.$
\end{thm}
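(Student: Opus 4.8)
The plan is to prove both implications via a uniform analysis of the dynamics of $\G$ on a generating curve and its $\G$-orbit. For the easy direction, suppose there exists a rectifiable quasi-circle $\eta_\G$ for $\G$. By Proposition \ref{marking}, $\eta_\G$ is generated by some generating curve $\zeta$ consisting of finitely many arcs inside a fundamental domain $\M F$, and $\eta_\G = \Lambda_\G \cup \bigcup_{\g\in\G}\g(\zeta)$. Since $\eta_\G$ has finite $1$-dimensional Hausdorff measure, so does $\Lambda_\G \subset \eta_\G$; thus $\cH^1(\Lambda_\G) < \infty$, which forces $\dim_H \Lambda_\G \le 1$. To rule out equality, I would invoke the standard fact (Patterson–Sullivan theory / Bowen's formula) that for a geometrically finite group with no parabolics, $\dim_H \Lambda_\G = 1$ is achieved only by Fuchsian groups (conjugates of subgroups of $\mathrm{PSL}(2,\Rr)$), whose limit set is a full circle or interval — but a Schottky group of rank $g\ge 1$ has a Cantor limit set, hence is non-Fuchsian when $\dim_H \Lambda_\G \ge 1$ would be strict; more cleanly, one shows that if $\cH^1(\Lambda_\G)<\infty$ and $\Lambda_\G$ is not a round circle, then the conformal density forces $\dim_H < 1$. (This is exactly the content used by Bowen; I would cite it rather than reprove it.)

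For the substantive direction, suppose $\mathfrak{D}_\G < 1$. The plan is to construct a rectifiable quasi-circle directly. Fix a classical-type fundamental domain $\M F$ with boundary curves $\{c_i\}_1^{2g}$, choose basepoints $p_i \in c_i$ with $p_{i+g} = \g_i(p_i)$, and let $\zeta$ be a finite collection of smooth (say geodesic or analytic) arcs realizing a generating curve, so that $\eta_\G = \Lambda_\G \cup \bigcup_{\g\in\G}\g(\zeta)$ is a quasi-circle. The length of $\eta_\G$ is then $\leg(\zeta) \cdot \sum_{\g\in\G} \|\g'\|_{\infty,\zeta}$ up to bounded distortion, where the sup-norm of the derivative is taken over the relevant piece; more precisely, $\leg(\eta_\G) \le \sum_{\g \in \G} \leg(\g(\zeta)) \le C\sum_{\g\in\G} \operatorname{diam}(\g(\M F))$, since each $\g(\zeta)$ sits inside $\g(\M F)$ and $\g$ has bounded conformal distortion on $\M F$ (Koebe distortion, using that $\overline{\M F}$ stays a definite distance from $\Lambda_\G$ outside a neighborhood of $\partial\M F$, and handling the boundary arcs separately). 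So it suffices to show $\sum_{\g\in\G}\operatorname{diam}(\g(\M F)) < \infty$ when $\mathfrak D_\G < 1$. This is the Poincaré-series estimate: $\sum_{\g\in\G}\operatorname{diam}(\g(\M F))^s < \infty$ for all $s > \mathfrak D_\G$ (the critical exponent equals the Hausdorff dimension for convex-cocompact groups, by Sullivan/Bishop–Jones), and taking $s = 1$ is permissible precisely because $\mathfrak D_\G < 1$.

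The main obstacle is the boundary bookkeeping: the arcs of $\zeta$ that lie on $\partial\M F$ (the arcs on $c_{i+g}$ connecting $p_{i+g}$ to $\g_i(p_i)$) are shared between adjacent copies of the tiling, so one must check they are not double-counted and that their total contribution is still controlled by the same Poincaré series — this is a matter of organizing the sum over $\g$ by which face-pairing is applied, and follows once one fixes a consistent convention. A secondary technical point is the distortion estimate near $\partial \M F$: the Koebe bound $\leg(\g(\zeta)) \asymp \|\g'\|_{\zeta}\leg(\zeta)$ requires a definite-size disk around $\zeta$ inside $\Omega_\G$, which one arranges by pushing $\zeta$ slightly off the curves $c_i$ into the interior, or by using that $\g$ extends conformally to a neighborhood of $\overline{\M F}$ minus the parabolic fixed points (here there are none). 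Granting these, the finiteness of $\sum_\g \operatorname{diam}(\g(\M F))$ gives $\leg(\eta_\G) < \infty$, i.e. $\eta_\G$ is rectifiable, completing the proof.
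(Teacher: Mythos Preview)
Your proposal is essentially correct and matches the paper's approach, though note that the paper does not actually prove this theorem: it is a citation to Bowen \cite{Bowen}, accompanied only by the one-sentence remark that ``the proof of Theorem \ref{Bowen} relies on the fact that the Poincar\'e series of $\G$ converges if and only if $\mathfrak{D}_\G < 1$.'' Your substantive direction is exactly this, and indeed the paper's Proposition \ref{rec} carries out precisely your computation $\mu^1(\eta_\G) \asymp \mu^1(\zeta)\sum_{\g} |\g'(z)|$ to show that a rectifiable generating curve yields a rectifiable quasi-circle once the Poincar\'e series converges at $s=1$.

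One small wobble in your ``easy'' direction: the detour through $\cH^1(\Lambda_\G) < \infty \Rightarrow \dim_H \le 1$, followed by ruling out equality via a Fuchsian classification, is not the cleanest route, and your aside about Fuchsian Schottky groups is slightly garbled (Fuchsian Schottky groups of the second kind have Cantor limit sets of dimension strictly less than $1$, not equal to $1$, so they are not the obstruction you want). The clean argument --- which you also gesture at --- is symmetric to the other direction: finite length of $\eta_\G$ forces $\sum_{\g} |\g'(z)| < \infty$, i.e.\ the Poincar\'e series converges at $s=1$; since convex-cocompact groups are of divergence type at the critical exponent (Sullivan), this gives $\delta(\G) < 1$, and hence $\mathfrak{D}_\G < 1$. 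That is what Bowen does and what the paper's one-line remark is pointing to.
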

The proof of Theorem \ref{Bowen} relies on the fact that the Poincare series of $\G$ converges if and only if $\mathfrak{D}_\G<1$ \cite{Bowen}.
\begin{prop}\label{rec}
Let $\G$ be a Schottky group of $\mathfrak{D}_\G<1.$ Suppose a given generating curve $\zeta$ is a rectifiable curve. Then $\eta_\G$ is a rectifiable quasi-circle of $\G.$
\end{prop}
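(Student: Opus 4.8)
The plan is to estimate the one-dimensional Hausdorff measure of $\eta_\G=\Lambda_\G\cup\bigcup_{\g\in\G}\g(\zeta)$ directly. Since $\mathfrak{D}_\G<1$ the limit set has Hausdorff dimension $<1$, so $\mathcal{H}^1(\Lambda_\G)=0$, and by countable subadditivity it suffices to show $\sum_{\g\in\G}\leg(\g(\zeta))<\infty$. I would carry this out in the spherical metric on $\overline{\mathbb{C}}$, so that every element of $\G$ is a conformal diffeomorphism and $\infty$ is not exceptional; writing $\|\g'\|$ for the corresponding derivative, the change of variables formula gives $\leg(\g(\zeta))\le\big(\sup_{z\in\zeta}\|\g'(z)\|\big)\,\leg(\zeta)$. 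Since the generating curve $\zeta$ lies in the fixed compact set $\bar{\mathcal F}$ (its arcs run through $\mathcal{F}$ or along $\partial\mathcal{F}$), it is then enough to prove
\[
\sum_{\g\in\G}\ \sup_{z\in\bar{\mathcal F}}\|\g'(z)\|<\infty .
\]

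The crucial ingredient is a uniform bounded distortion estimate: there is a constant $C=C(\G,\mathcal{F})$ and a basepoint $z_0\in\bar{\mathcal F}$ with $\sup_{z\in\bar{\mathcal F}}\|\g'(z)\|\le C\,\|\g'(z_0)\|$ for \emph{every} $\g\in\G$. To prove it, write $\g=h_n\circ\cdots\circ h_1$ as a reduced word in the generators $\g_i^{\pm1}$; each $h_k$ maps the closed disk on one side of its defining Jordan curve conformally onto one of the disks $\bar D_j$, and, because the word is reduced and the closures $\bar D_1,\dots,\bar D_{2g}$ are pairwise disjoint, the intermediate image $(h_k\circ\cdots\circ h_1)(\bar{\mathcal F})$ is contained in a fixed disk $\bar D_{m_k}$ lying \emph{compactly} inside the domain of $h_{k+1}$, with conformal modulus bounded below independently of $k$. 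The Koebe/Schwarz--Pick distortion theorem gives, at each such step, a distortion factor of the form $1+O(\rho^{k})$ with $\rho<1$; these telescope to a convergent product, and only the initial step (applying $h_1$ to $\bar{\mathcal F}$, which is not compactly contained in the domain of $h_1$) contributes a further fixed factor. This compositional bookkeeping is where I expect the real work to lie: the point is precisely that the distortion constant is uniform in the word length $n$ rather than growing geometrically, which is the separation property built into a Schottky configuration; one must also check that the finitely many arcs of $\zeta$ lying on $\partial\mathcal{F}$ cause no trouble, which the spherical metric makes routine.

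Granting the distortion estimate, $\sum_{\g}\sup_{\bar{\mathcal F}}\|\g'\|\le C\sum_{\g\in\G}\|\g'(z_0)\|$, and the last sum is, up to the bounded comparison of the spherical with the Euclidean/hyperbolic derivative on a fixed compact part of $\Omega_\G$, the Poincar\'e series of $\G$ at exponent $s=1$. As recalled after Theorem \ref{Bowen}, this series converges precisely when $\mathfrak{D}_\G<1$, which is our hypothesis; hence $\sum_{\g}\leg(\g(\zeta))<\infty$, so $\eta_\G$ is a rectifiable Jordan curve, i.e.\ a rectifiable quasi-circle of $\G$. If one prefers to stay strictly within this section rather than invoke the Poincar\'e series dichotomy, Theorem \ref{Bowen} already yields some rectifiable quasi-circle $\eta'_\G$, hence by Proposition \ref{marking} a rectifiable generating curve $\zeta_0\subset\bar{\mathcal F}$ with $\sum_\g\leg(\g(\zeta_0))\le\mathcal{H}^1(\eta'_\G)<\infty$; feeding both $\zeta$ and $\zeta_0$ into the distortion estimate gives $\leg(\g(\zeta))\le C^2\tfrac{\leg(\zeta)}{\leg(\zeta_0)}\leg(\g(\zeta_0))$ for all $\g$, and summing finishes the proof. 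Either way, the sole non-formal input is the convergence of the length sum, and that is exactly where the assumption $\mathfrak{D}_\G<1$ is used.
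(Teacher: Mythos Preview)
Your proof is correct and follows essentially the same route as the paper's: decompose $\eta_\G$ as $\Lambda_\G\cup\bigcup_{\g}\g(\zeta)$, note $\mathcal H^1(\Lambda_\G)=0$, bound each $\leg(\g(\zeta))$ by $\|\g'\|\cdot\leg(\zeta)$, and invoke convergence of the Poincar\'e series at exponent $1$. The paper compresses all of this into the single line $\mu^1(\eta_\G)\asymp\mu^1(\zeta)\sum_{\g}|\g'(z)|$ without justifying the $\asymp$; your Koebe/Schwarz--Pick distortion argument (uniform over word length, exploiting the nesting of Schottky disks) is precisely what makes that comparison rigorous, and your alternative route via Theorem~\ref{Bowen} and Proposition~\ref{marking} is a nice way to avoid quoting the Poincar\'e-series dichotomy directly.
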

\begin{proof}
Let $\mu^1$ be the $1$-dimensional Hausdorff measure. Since $\mathfrak{D}_\G<1,$ we have $\mu^1(\Lambda_\G)=0.$ Let $\G=\{\g_k\}_{k=1}^\infty$, 
let $\g'_k$ denotes the derivative of $\g_k.$ Then we have,   
\begin{align*}
\mu^1(\eta_\G)&=\sum^\infty_{k=1}\mu^1(\g_k(\zeta)) \\
&\asymp\mu^1(\zeta)\sum^\infty_{k=1}|\g'_k(z)|,  \quad z\in \zeta.
\end{align*} 
Also since $\mathfrak{D}_\G<1$ if and only if Poincare series satisfies $\sum^\infty_{k=1}|\g'_k|<\infty.$ This implies that $\eta_\G$ is rectifiable if and only if 
$\zeta$ rectifiable.       
\end{proof}
Let $W\subset\mathbb{C}$ be a compact set. Denote the space of closed curves with \emph{bounded} length in $\mathbb{C}$ that intersect with $W$ by: 
\[\mathscr{R}(S^1,W)\subset\{h:S^1\to\mathbb{C}| h(S^1)\cap W\not=\emptyset,\quad\text{$h$ continuous rectifiable map}\}.\]
For $h_1,h_2\in\mathscr{R}(S^1,W)$ let $\ell(h_1),\ell(h_2)$ be it's respective arclength. The Fr\'{e}chet distance is defined as,
 \[d_F(h_1,h_2)=\inf\{\sup|h_1(\sigma_1)-h_2(\sigma_2)|;\sigma_1,\sigma_2\in\text{Homeo}(S^1)\}+|\ell(h_1)-\ell(h_2)|.\]
For a given compact $W\subset\mathbb{C},$ the space of closed curves with bounded length $\mathscr{R}(S^1,W)$ is a metric space with respect to $d_F.$ 
Two curves in $\phi,\psi\in\mathscr{R}(S^1,W)$ are same if there exists parametrization $\sigma$ such that $\psi(\sigma)=\phi$ and  $\ell(\phi)=\ell(\psi).$
The topology on $\mathscr{R}(S^1,W)$ is defined with respect to the metric $d_F$, see \cite{BR} p388. Let $\xi$ be a generating curve for $\eta_\G$. Fix a indexing of $\G$, set $\xi_i=\cup^i_1\g_j(\xi)$. Let $\sigma_i$ be a parametrization of $\xi_i$ such that 
$\sigma=\cup_i\sigma_i$ define a parametrization of $\eta_\G.$ Then $d_F(\eta^1_\G,\eta^2_\G)\le M(\inf_{\sigma^1,\sigma^2}|\xi(\sigma^1_k)-\zeta(\sigma_k^2)|+|\ell(\xi)-\ell(\zeta)|)$ for some $M, k.$ This implies continuity of $\eta_\G$ with respect to generating curve $\xi.$

\begin{prop}\label{curves-space}
For a given compact $W\subset\mathbb{C},$ the space $\mathscr{R}(S^1,W)$ is complete metric space with respect to $d_F.$
 \end{prop}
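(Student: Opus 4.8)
The plan is to show that $\mathscr{R}(S^1,W)$ with the Fr\'echet metric $d_F$ is complete by the standard strategy: take a Cauchy sequence $\{h_n\}$, extract a candidate limit curve, and verify the limit lies in $\mathscr{R}(S^1,W)$ and that $h_n\to h$ in $d_F$. First I would observe that $d_F(h_1,h_2)$ controls two things simultaneously: the uniform distance between the curves under optimal reparametrizations, and the difference of arclengths $|\ell(h_1)-\ell(h_2)|$. From the Cauchy condition the sequence of lengths $\ell(h_n)$ is Cauchy in $\mathbb{R}$, hence converges to some finite $L\ge 0$; in particular the lengths are uniformly bounded, say by $L_0$. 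Since each $h_n$ meets the compact set $W$ and has length $\le L_0$, the images $h_n(S^1)$ all lie in a fixed bounded set (the $L_0$-neighborhood of $W$), so we are working inside a compact subset of $\mathbb{C}$.

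Next I would pass to arclength-type parametrizations: reparametrize each $h_n$ proportionally to arclength on $S^1=\mathbb{R}/\mathbb{Z}$, so that $h_n$ is Lipschitz with constant $\ell(h_n)\le L_0$. The family $\{h_n\}$ is then uniformly bounded and equi-Lipschitz, hence equicontinuous, so by Arzel\`a--Ascoli a subsequence converges uniformly to a limit map $h:S^1\to\mathbb{C}$, which is Lipschitz with constant $\le L_0$ and therefore rectifiable with $\ell(h)\le L_0$. Because the $h_n$ all meet $W$ and $W$ is closed, uniform convergence gives $h(S^1)\cap W\ne\emptyset$, so $h\in\mathscr{R}(S^1,W)$. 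The next point is to check that $\ell(h)=L=\lim\ell(h_n)$: lower semicontinuity of length under uniform convergence gives $\ell(h)\le L$, and I would get the reverse inequality from the $d_F$-Cauchy condition, since $d_F$ explicitly includes the term $|\ell(h_n)-\ell(h_m)|$, forcing the length of the limit to match $L$ rather than drop. Finally, a standard argument upgrades convergence of a subsequence to convergence of the whole sequence: a Cauchy sequence with a convergent subsequence converges to the same limit, using here that $d_F(h_n,h)\le d_F(h_n,h_{n_k})+d_F(h_{n_k},h)$ and that both the reparametrization-sup term and the length term on the right go to zero.

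The main obstacle, and the step needing the most care, is the interaction between the choice of reparametrizations in the definition of $d_F$ and the Arzel\`a--Ascoli extraction. The infimum over $\sigma_1,\sigma_2\in\mathrm{Homeo}(S^1)$ means $d_F$ is genuinely a metric on equivalence classes of curves modulo reparametrization, so I must be careful that the uniformly convergent representatives I extract actually realize (up to vanishing error) the Fr\'echet distances $d_F(h_n,h_m)$; equivalently, that a $d_F$-Cauchy sequence admits representatives that are Cauchy in the uniform norm. This is the classical fact that the Fr\'echet metric is topologically equivalent to working with arclength parametrizations when lengths are bounded, but writing it cleanly requires noting that near-optimal reparametrizations can be taken close to arclength parametrizations; I would cite \cite{BR} p.~388 for the metric-space structure and fill in that the arclength representatives form a Cauchy sequence in $C(S^1,\mathbb{C})$, which is complete, so the limit obtained by Arzel\`a--Ascoli is in fact the $d_F$-limit of the full sequence. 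The degenerate case $L=0$ (the limit is a constant map / point) should be handled separately or simply allowed, since a point still meets $W$ if the $h_n$ shrink onto a point of $W$; if one prefers to exclude constant curves one notes this cannot happen when the $\ell(h_n)$ are bounded below, but the statement as given does not require it.
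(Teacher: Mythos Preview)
Your proposal is correct and follows essentially the same approach as the paper: pass to arclength (Lipschitz) parametrizations with uniformly bounded Lipschitz constants, observe that all curves lie in a fixed compact subset of $\mathbb{C}$ since they meet $W$ and have bounded length, and extract a limit via Arzel\`a--Ascoli. Your version is in fact more detailed than the paper's brief argument, which invokes exactly these ingredients but does not explicitly discuss the length convergence or the reparametrization subtlety you flag.
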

 \begin{proof}
Let $\{\phi_i\}\subset\mathscr{R}(S^1,W)$ be a Cauchy sequence. $\phi_i$ are rectifiable curves of bounded length and so there exists $\{\sigma_i\}$ Lipschitz parameterizations with bounded Lipschitz constants, such that
$\{\phi_i(\sigma_i(t))\}$ are uniformly Lipschitz. Then completeness follows from
the fact that all curves of $\mathscr{R}(S^1,W)$ are contained within some large compact subset of $\mathbb{C}.$
 \end{proof}
 \begin{prop}\label{H}
There exists a $\G\in\overline{\mathfrak{J}^{H_c}_g}$ in the closure of $\mathfrak{J}^{H_c}_g$, such that $\G$ is not a classical Schottky group. 
\end{prop}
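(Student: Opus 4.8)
The plan is to argue by contradiction, using the maximality in the definition of $H_c$. So suppose, for contradiction, that \emph{every} $\G \in \overline{\mathfrak{J}^{H_c}_g}$ is a classical Schottky group. I want to show that this forces $H_c$ to be non-maximal, i.e.\ that there is some $\lambda > H_c$ such that every Schottky group (of any rank) of Hausdorff dimension $< \lambda$ is classical, contradicting the definition of $H_c$ as a supremum.

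First I would record the structural facts already available: by Theorem \ref{Hou} together with real-analyticity of the Hausdorff dimension function on each Schottky space $\mathfrak{J}_g$, the set $\mathfrak{J}^{H_c}_g$ is a $3g-3$ dimensional open connected submanifold of $\mathfrak{J}_{g,o}$, and $\overline{\mathfrak{J}^{H_c}_g} = \mathfrak{J}^{H_c}_g \cup \partial \mathfrak{J}^{H_c}_g$. Under the contradiction hypothesis every boundary point is a classical Schottky group; since $\mathfrak{J}_{g,o}$ is open in $\mathfrak{J}_g$, a neighborhood of each such boundary group consists of classical Schottky groups as well. The key step is then a compactness/continuity argument: one shows that along $\partial\mathfrak{J}^{H_c}_g$ the Hausdorff dimension is exactly $H_c$ (it cannot be $<H_c$ by openness of $\mathfrak{J}^{H_c}_g$, and it is $\le H_c$ by continuity), and that because every such boundary group is classical and "classicality" is an open condition, a whole one-sided neighborhood $\mathfrak{J}^{H_c+\epsilon}_g$ still lies inside $\mathfrak{J}_{g,o}$. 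Doing this uniformly — getting an $\epsilon$ independent of the rank $g$ — is where the real work lies; I would extract it from the quantitative nature of Bowen's criterion (Theorem \ref{Bowen}) and the control on Poincar\'e series, so that the gap between $H_c$ and the first genuinely non-classical group is bounded below uniformly in $g$.

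Concretely, the steps in order: (1) fix $g$ and show $\partial\mathfrak{J}^{H_c}_g \subset \mathfrak{J}_{g,o}$ under the contradiction hypothesis; (2) use openness of $\mathfrak{J}_{g,o}$ and compactness of the "unit-dimension slice" to produce $\epsilon_g > 0$ with $\mathfrak{J}^{H_c+\epsilon_g}_g \subset \mathfrak{J}_{g,o}$; (3) upgrade to a uniform $\epsilon = \inf_g \epsilon_g > 0$; (4) conclude that every Schottky group of Hausdorff dimension $< H_c + \epsilon$ is classical, contradicting maximality of $H_c$. Hence some boundary point of $\mathfrak{J}^{H_c}_g$ must fail to be a classical Schottky group, proving the proposition.

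The main obstacle I anticipate is step (3), the uniformity across ranks: a priori the neighborhoods of classicality could shrink as $g\to\infty$, and Marden's non-density result warns that classicality is a delicate condition. Overcoming this requires that the argument be genuinely quantitative — controlling how far one can perturb a dimension-$H_c$ Schottky group while keeping all $2g$ generating curves round circles, with bounds depending only on the dimension and not on $g$. I expect this to lean on the rectifiable $\G$-invariant curve picture (Bowen's theorem and Proposition \ref{rec}) and on the completeness of $\mathscr{R}(S^1,W)$ established in Proposition \ref{curves-space}, which lets one pass to limits of quasi-circles without losing rectifiability.
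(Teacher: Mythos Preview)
Your overall contradiction strategy --- assume every $\Gamma\in\overline{\mathfrak{J}^{H_c}_g}$ is classical and contradict the maximality of $H_c$ --- is the same as the paper's. The execution, however, is significantly more elaborate than what is needed, and the step you flag as the main obstacle is in fact a red herring.

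The paper does not attempt to manufacture an $\epsilon_g$ or a uniform $\epsilon$. Instead it argues directly by sequences: maximality of $H_c$ supplies non-classical Schottky groups $\Gamma_n$ with $\mathfrak{D}_{\Gamma_n}\searrow H_c$; passing to a limit $\Gamma$, continuity of the Hausdorff dimension gives $\mathfrak{D}_\Gamma=H_c$, so under the contradiction hypothesis $\Gamma$ is either classical or not a Schottky group at all; in either case an open neighborhood $U$ of $\overline{\mathfrak{J}^{H_c}_g}$ inside $\mathfrak{J}_{g,o}$ absorbs $\Gamma_n$ for large $n$, contradicting their non-classicality. No compactness of a level set and no quantitative $\epsilon$ are ever invoked.

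Two specific remarks on your outline. First, your step (2) as written has a gap: the ``unit-dimension slice'' $\{\mathfrak{D}=H_c\}\cap\mathfrak{J}_g$ is not known to be compact (Schottky space is non-compact and nothing forces this level set to be), so you cannot upgrade pointwise openness of $\mathfrak{J}_{g,o}$ to a uniform $\epsilon_g$ by a covering argument; the sequence argument above is exactly what substitutes for this. Second, your step (3) is unnecessary: since there are only countably many ranks, any sequence of non-classical $\Gamma_n$ with $\mathfrak{D}_{\Gamma_n}\searrow H_c$ has infinitely many terms of some fixed rank $g$, and one runs the limit argument entirely inside that $\mathfrak{J}_g$. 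No uniformity in $g$ is required, and hence Bowen's criterion, Poincar\'e series bounds, and the curve space $\mathscr{R}(S^1,W)$ play no role in this proposition --- those tools enter the paper only later, in Lemmas \ref{open} and \ref{singular}.
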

\begin{proof}
Suppose false, then every element $\Gamma\in\overline{\mathfrak{J}^{H_c}_g}$  with Hausdorff dimension $\le H_c$ is a classical Schottky group.
Since the classical Schottky space $\mathfrak{J}_{g,o}$ is open in the Schottky space $\mathfrak{J}_g$,  we have a open neighborhood $U$ of 
$\overline{\mathfrak{J}^{H_c}_g}$ in $\mathfrak{J}_g$ such that $U\subset\mathfrak{J}^{H_c}_{g,o}.$ By definition of $H_c$, it is maximal. 
Hence there are non-classical Schottky groups of Hausdorff dimension arbitrarily close to $H_c$ and $>H_c$. Then there exists a sequence of 
non-classical Schottky groups $\{\Gamma_{n}\}$ of Hausdorff dimensions $\mathfrak{D}_{\Gamma_n}\searrow H_c.$ Let $\Gamma=\lim_n\Gamma_n.$ Since by assumption, all 
Schottky groups of Hausdorff dimension $H_c$ is classical,  we must have $\Gamma$ either it is \emph{not}
a Schottky group, or it is a classical Schottky group.  This implies that for large $n$ we must have $\Gamma_n\in U$, which is a contradiction to the sequence $\Gamma_n$ been
all non-classical Schottky groups, hence $H_c$ is not maximal.
\end{proof}

\begin{prop}\label{deformation}
There exists a non-classical Schottky group $\G$ of $\mathfrak{D}_\G=H_c,$ and sequence of $\{\G_n\}$ classical Schottky groups such that $\mathfrak{D}_{\G_n}<\mathfrak{D}_\G$ with $\sup \mathfrak{D}_{\G_n}=\mathfrak{D}_\G.$
\end{prop}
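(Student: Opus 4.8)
The plan is to extract the statement from the constructions already in place, combining Proposition \ref{H} with Theorem \ref{Hou} and Bowen's criterion (Theorem \ref{Bowen}). By Proposition \ref{H}, there is a group $\G$ in the closure $\overline{\mathfrak{J}^{H_c}_g}$ which is not a classical Schottky group. First I would argue that $\G$ itself is a Schottky group: being a limit of rank-$g$ Schottky groups it is a limit of convex-cocompact faithful representations of $\mathbb{F}_g$, and by our global assumption all these approximants have Hausdorff dimension $<H_c<1$, hence satisfy the Poincar\'e series convergence (Theorem \ref{Bowen}); I would use lower semicontinuity/analyticity of the Hausdorff-dimension function (recalled in the introduction) together with the fact that $\mathfrak{J}^{H_c}_g$ is open in $\mathfrak{J}_g$ to conclude that $\G\in\partial\mathfrak{J}^{H_c}_g$ is still a Schottky group with $\mathfrak{D}_\G=H_c$ (it cannot have smaller dimension, else it would lie in the open set $\mathfrak{J}^{H_c}_g\subset\mathfrak{J}_{g,o}$ and be classical, contradicting Proposition \ref{H}; it cannot degenerate off $\mathfrak{J}_g$ for the same reason Proposition \ref{H}'s proof rules out, since then some $\G_n$ would already be forced into the classical open set).

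Second, with $\G$ a non-classical Schottky group of $\mathfrak{D}_\G=H_c$ in hand, I would produce the approximating sequence $\{\G_n\}$ of \emph{classical} Schottky groups. Since $\G\in\overline{\mathfrak{J}^{H_c}_g}$, pick any sequence $\G_n\in\mathfrak{J}^{H_c}_g$ with $\G_n\to\G$ in $\mathfrak{J}_g$. By Theorem \ref{Hou} (equivalently, by $\mathfrak{J}^{H_c}_g\subset\mathfrak{J}_{g,o}$), every such $\G_n$ is a classical Schottky group, and by definition of $\mathfrak{J}^{H_c}_g$ each has $\mathfrak{D}_{\G_n}<H_c=\mathfrak{D}_\G$. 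Finally, to see $\sup_n\mathfrak{D}_{\G_n}=\mathfrak{D}_\G$, I would use continuity (real analyticity) of the Hausdorff-dimension function on Schottky space: since $\G_n\to\G$ we get $\mathfrak{D}_{\G_n}\to\mathfrak{D}_\G=H_c$, so in particular the supremum is $H_c$; if I want a strictly increasing sequence I would pass to a subsequence along which $\mathfrak{D}_{\G_n}$ increases to $H_c$, which is possible because the values accumulate at $H_c$ from below.

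The main obstacle I anticipate is the first step: justifying rigorously that the boundary group $\G$ from Proposition \ref{H} is genuinely a \emph{Schottky} group (rather than a degenerate or geometrically infinite limit) and that its Hausdorff dimension is exactly $H_c$ rather than strictly smaller. This needs the openness of $\mathfrak{J}^{H_c}_g$ in $\mathfrak{J}_g$ and the semicontinuity of $\mathfrak{D}$: a limit of groups with $\mathfrak{D}<H_c$ has $\mathfrak{D}\le H_c$, and if the limit had $\mathfrak{D}<H_c$ it would lie in the open connected submanifold $\mathfrak{J}^{H_c}_g$, hence be classical, contradicting the choice of $\G$; and if the limit failed to be Schottky, the argument in the proof of Proposition \ref{H} shows the approximants would eventually have to enter the open classical set $U$, which is compatible here but still requires checking that $\G$ does not escape $\mathfrak{J}_g$ entirely. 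I would phrase this as: $\G\in\overline{\mathfrak{J}^{H_c}_g}\setminus\mathfrak{J}^{H_c}_g$ is a non-classical element of $\overline{\mathfrak{J}^{H_c}_g}$, and by the dichotomy in Proposition \ref{H}'s proof it is either not Schottky or classical; since it is not classical, the only remaining possibility under the contradiction hypothesis $H_c<1$ is that it \emph{is} a Schottky group sitting on $\partial\mathfrak{J}^{H_c}_g$ with $\mathfrak{D}_\G=H_c$ — and this is exactly the content of Lemma \ref{singular} being invoked in the sequel. Everything else is a routine consequence of the real analyticity of the dimension function and the inclusion $\mathfrak{J}^{H_c}_g\subset\mathfrak{J}_{g,o}$.
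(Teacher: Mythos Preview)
Your core approach matches the paper's: use Proposition~\ref{H} to obtain a non-classical $\G\in\overline{\mathfrak{J}^{H_c}_g}$, invoke real analyticity of $\mathfrak{D}$ on $\mathfrak{J}_g$ together with the inclusion $\mathfrak{J}^{H_c}_g\subset\mathfrak{J}_{g,o}$ (Theorem~\ref{Hou}) to force $\mathfrak{D}_\G=H_c$, and then take any sequence $\G_n\in\mathfrak{J}^{H_c}_g$ converging to $\G$ to get classical groups with $\mathfrak{D}_{\G_n}\to H_c$ from below. The paper's proof is a two-line version of exactly this.

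Two points of overcomplication, though. First, the closure in Proposition~\ref{H} is taken inside the Schottky space $\mathfrak{J}_g$, so $\G$ is automatically a Schottky group; your detour through Bowen's criterion and Poincar\'e series is unnecessary for this proposition. Second, and more seriously, your final paragraph appeals to Lemma~\ref{singular} to rule out the case that $\G$ is ``not Schottky or classical.'' This is circular: Lemma~\ref{singular} is proved \emph{after} Proposition~\ref{deformation} and in fact uses the setup that Proposition~\ref{deformation} provides. The correct argument at this stage is simply that $\G\in\overline{\mathfrak{J}^{H_c}_g}\subset\mathfrak{J}_g$ is Schottky by construction, and $\mathfrak{D}_\G<H_c$ would place it in the open set $\mathfrak{J}^{H_c}_g\subset\mathfrak{J}_{g,o}$, contradicting non-classicality; hence $\mathfrak{D}_\G=H_c$. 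Drop the forward reference and your argument is clean.
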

\begin{proof}
It follows from that the Hausdorff dimension map $\mathfrak{D}:\mathfrak{J}_g\to (0,2)$ is real analytic map, and by Proposition \ref{H}, there exists a non-classical $\G$ with
$\mathfrak{D}_\G=H_c.$ Theorem \ref{Hou} implies $\mathfrak{J}^{H_c}_{g}$ is open submanifold of $\mathfrak{J}_{g,o},$ hence there exists a sequence of classical Schottkys $\{\G_n\}\subset\mathfrak{J}^{H_c}_{g}$ with $\mathfrak{D}_{\G_n}\to\mathfrak{D}_\G.$
\end{proof}

\section{Schottky Space and rectifiable curves}
Take $\{\G_n\}$ and $\G$ as given by Proposition \ref{deformation}. In particular, $\{\G_n\}$ is a sequence of classical Schottky groups such that $\G_n\to\G.$ Denote $QC(\mathbb{C})$ space of quasiconformal maps on $\mathbb{C}$. It follows from quasiconformal deformation theory of Schottky space, for
$\G_c$ classical Schottky group of Hausdorff dimension $<H_c$ we can write ,
\[\mathfrak{J}_g=\{f\circ\gamma\circ f^{-1}\in\text{PSL}(2,\mathbb{C})|  f\in QC(\mathbb{C}), \gamma\in\Gamma_c\}/\mbox{PSL}(2,\mathbb{C}).\]

\noindent{\bf Remark:} 
From \emph{now on} throughout rest of the paper, we fix  $\G_c$ to be a classical Schottky group with Hausdorff dimension 
$\mathfrak{D}_{\G_c}<H_c.$ \\

\noindent{\bf Notations:} 
Set $\mathfrak{H}$ to be the collection of all Schottky groups of Hausdorff dimension $<H_c.$

Note that there exists a sequence of quasiconformal maps $f_n$ and $f$ of $\mathbb{C}$ such that, we can write
 $\G_n=f_n(\G)$ and $\G=f(\G_c).$ Here we write $f(\G):=\{f \circ g\circ f^{-1}| g\in \G\}$ for a given Kleinian group $\G$ and quasiconformal map $f.$\par
 
 Schottky space $\mathfrak{J}_g$ can also be considered as subspace of $\mathbb{C}^{3g-3}.$ This provides $\mathfrak{J}_g$ analytic structure as 
 $3g-3$-dimensional complex analytic manifold. 
 
 \begin{prop}\label{domain}
 Let $\{\G_n\}$ be a sequence of Schottky groups with $\G_n\to\G$ to a Schottky group $\G.$ Let $\mathcal{F}$ be a fundamental domain of $\G.$ There exists
 a sequence of fundamental domain $\{\mathcal{F}_n\}$ of $\G_n$ such that $\mathcal{F}_n\to\mathcal{F}.$ 
 \end{prop}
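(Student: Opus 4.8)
The plan is to use the quasiconformal description of the Schottky space to transport a fixed fundamental domain along the convergence $\G_n\to\G$. Concretely, fix the base classical Schottky group $\G_c$ and write $\G=f(\G_c)$, $\G_n=f_n(\G_c)$ for quasiconformal maps $f,f_n$ of $\overline{\Cc}$, normalized (say by fixing three points) so that the correspondence $\G_n\mapsto f_n$ is the one coming from the chart $\mathfrak{J}_g\subset\Cc^{3g-3}$. Since $\G_n\to\G$ in $\mathfrak{J}_g$, the Beltrami coefficients $\mu_{f_n}$ converge to $\mu_f$ in the appropriate sense (uniformly on compacta, with the same dilatation bound), and hence by the theorem on quasiconformal dependence on parameters $f_n\to f$ uniformly on $\overline{\Cc}$. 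The group $\G_c$ being \emph{classical}, it admits an explicit classical fundamental domain $\cF_c$ bounded by $2g$ disjoint round circles $\{c_i^c\}_1^{2g}$ with $\g_i^c(c_i^c)=c_{i+g}^c$; then $\cF:=f(\cF_c)$ is a fundamental domain of $\G$ bounded by the Jordan curves $c_i:=f(c_i^c)$, and $\cF_n:=f_n(\cF_c)$ is a fundamental domain of $\G_n$ bounded by $c_i^n:=f_n(c_i^c)$.

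The verification then proceeds in three short steps. First, $\cF_n$ really is a fundamental domain for $\G_n$: this is immediate because $f_n$ conjugates the side-pairing of $\cF_c$ for $\G_c$ to the side-pairing of $\cF_n$ for $\G_n$, i.e. $\g_i^n(c_i^n)=c_{i+g}^n$ and $\g_i^n(f_n(D_i^c))\cap f_n(D_{i+g}^c)=\emptyset$, and the $\overline{f_n(D_i^c)}$ remain pairwise disjoint since $f_n$ is a homeomorphism. Second, the convergence $\cF_n\to\cF$: from $f_n\to f$ uniformly on $\overline{\Cc}$ we get $c_i^n=f_n(c_i^c)\to f(c_i^c)=c_i$ uniformly, hence $\partial\cF_n\to\partial\cF$ and, using that the $c_i$ remain disjoint Jordan curves, $\cF_n\to\cF$ in the Carathéodory/Hausdorff sense on domains. (One should state precisely in which topology ``$\cF_n\to\cF$'' is meant — Hausdorff convergence of the closed regions, equivalently uniform convergence of the $2g$ boundary parametrizations — and that is the convergence being proved.) Third, if one instead wants $\cF$ prescribed in advance rather than of the special form $f(\cF_c)$, one notes any two fundamental domains of $\G$ differ by a $\G$-equivariant isotopy of $\Omega_\G$, and one pushes that isotopy through $f_n\circ f^{-1}$ (which converges to the identity uniformly on compacta of $\Omega_\G$) to correct $f_n(\cF_c)$ to the desired $\cF_n\to\cF$.

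The main obstacle is the control of the quasiconformal maps near the limit sets: $f_n\to f$ globally uniformly is what makes $\partial\cF_n\to\partial\cF$, and this requires knowing that the dilatations $K(f_n)$ stay bounded as $\G_n\to\G$ — which holds precisely because convergence in $\mathfrak{J}_g$ means convergence of the normalized Beltrami parameters — together with the standard compactness and continuity theorems for normalized $K$-quasiconformal maps (Ahlfors--Bers). A secondary technical point is the disjointness of the limiting boundary curves: a priori two of the curves $c_i=f(c_i^c)$ could touch, but they cannot, since $f$ is a homeomorphism of the sphere and the $\overline{D_i^c}$ are disjoint; this is exactly what guarantees $\cF$ is a genuine fundamental domain and that the Hausdorff limit of the $\cF_n$ is $\cF$ and not something degenerate. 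These degeneration phenomena are, of course, the subject of the later singularity analysis (Lemma \ref{singular}); here they do not arise because $\G$ is assumed to be an honest Schottky group.
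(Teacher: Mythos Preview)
Your proposal is correct and follows essentially the same idea as the paper's proof: transport the fundamental domain along the quasiconformal deformations realizing $\G_n\to\G$. The only difference is that the paper works directly with maps $f_n$ satisfying $\G_n=f_n(\G)$ (so $f_n\to\mathrm{id}$), setting $\mathcal{F}_n=f_n(\mathcal{F})$ for the \emph{given} $\mathcal{F}$; this bypasses your detour through the base $\G_c$ and makes your third ``correction'' step unnecessary, since the prescribed $\mathcal{F}$ is handled from the start.
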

 
 \begin{proof}
 Let $\cup_{i=1}^{2g}\mathcal{C}_i=\partial\mathcal{F}$ be the Jordan curves which is the boundary of $\mathcal{F}.$ 
 Set $\mathcal{C}_n=f^{-1}_n(\cup_{i=1}^{2g}\mathcal{C}_i).$ Then $\mathcal{C}_n$ is the boundary of a fundamental domain of $f^{-1}_n(\G).$
 Hence we have a fundamental domain $\mathcal{F}_n$ of $\G_n$ defined by $\mathcal{C}_n$ with $\mathcal{F}_n\to\mathcal{F}.$
 \end{proof}

 \begin{lem}\label{sequence}
Suppose $f_n\to f$ and $\G=f(\G_c).$ Every quasi-circle with bounded length of $\G_n=f_n(\G_c)$ is in $\mathscr{R}(S^1,W)$ for some compact $W.$
 \end{lem}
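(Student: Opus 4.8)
The plan is to produce a single compact set $W\subset\mathbb{C}$ which meets every finite-length quasi-circle of every $\G_n$ simultaneously; the mechanism is that, by definition, a quasi-circle of $\G_n$ must contain the limit set $\Lambda_{\G_n}$, and the convergence $f_n\to f$ forces all the $\Lambda_{\G_n}$ to stay inside a fixed compact region of $\mathbb{C}$. First I would normalize $\G_c$ so that $\infty\in\Omega_{\G_c}$; then $\Lambda_{\G_c}$ is a fixed compact subset of $\mathbb{C}$, and $\Lambda_{\G_n}=f_n(\Lambda_{\G_c})$, $\Lambda_\G=f(\Lambda_{\G_c})$.

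Next I would use that $f_n\to f$ in the sense of quasiconformal convergence on $\overline{\mathbb{C}}$, so that the $f_n$ converge to $f$ uniformly in the spherical metric and form a normal family (and likewise $f_n^{-1}\to f^{-1}$ uniformly). The only way $\Lambda_{\G_n}=f_n(\Lambda_{\G_c})$ could fail to stay in a fixed Euclidean ball is for some $f_n$ to carry a point of $\Lambda_{\G_c}$ near $\infty$, i.e. for $f_n^{-1}(\infty)$ to approach $\Lambda_{\G_c}$. But $f^{-1}(\infty)\in\Omega_{\G_c}$ lies at positive distance from $\Lambda_{\G_c}$, so uniform convergence of $f_n^{-1}$ gives an $r>0$ with $\dis(f_n^{-1}(\infty),\Lambda_{\G_c})\ge r$ for all large $n$; hence $\Lambda_{\G_n}$ omits a fixed spherical neighbourhood of $\infty$, and with uniform convergence of $f_n$ this yields a ball $\overline{B(0,R)}$ containing $\Lambda_\G$ together with $\Lambda_{\G_n}$ for all large $n$. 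Enlarging $R$ to absorb the finitely many remaining (compact) limit sets, set $W=\overline{B(0,R)}$.

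Finally, let $\eta_{\G_n}$ be any quasi-circle of $\G_n$ of finite length. By the definition of quasi-circle it contains the nonempty set $\Lambda_{\G_n}\subset W$, so $\eta_{\G_n}(S^1)\cap W\ne\emptyset$; being a rectifiable Jordan curve of finite (Euclidean) length it does not meet $\infty$, hence is a continuous rectifiable map $S^1\to\mathbb{C}$, and therefore $\eta_{\G_n}\in\mathscr{R}(S^1,W)$. If one moreover wants the curves themselves confined to a fixed compact set, as is used in the proof of Proposition \ref{curves-space}, observe that a closed curve of length $L$ meeting $W$ lies in the $L/2$-neighbourhood of $W$, so any uniform length bound on the family of quasi-circles under consideration gives a uniform compact containment. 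The only genuine point in the argument is the control of $f_n^{-1}(\infty)$ — i.e. preventing the conjugating maps from pushing the limit set toward $\infty$ — which is exactly where the hypothesis $f_n\to f$ and the normalization $\infty\in\Omega_{\G_c}$ enter; everything else is bookkeeping.
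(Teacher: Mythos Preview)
Your argument is correct and follows the same route as the paper: a quasi-circle of $\G_n$ contains $\Lambda_{\G_n}$, and since $\Lambda_{\G_n}\to\Lambda_\G$ with $\Lambda_\G$ compact, one can choose a single compact $W\supset\bigcup_n\Lambda_{\G_n}\cup\Lambda_\G$. The paper states this in two lines without the normalization and $f_n^{-1}(\infty)$ analysis you supply; your version simply makes explicit why the convergence of limit sets keeps them uniformly bounded in $\mathbb{C}$.
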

 \begin{proof}
 Let $\eta_n$ be a quasi-circle of $\G_n.$ Note  $\Lambda_{\G_n}\subset\eta_n.$ Since limit set $\Lambda_{\G_n}\to\Lambda_{\G},$  and limit set $\Lambda(\G)$
 is compact, and $\eta_n$ is rectifiable, we can find some compact set $W\supset \cup_n\Lambda_{\G_n}\cup\Lambda_{\G}.$ 
 \end{proof}
 
Given $\{\G_n\}\subset\mathfrak{H}$ with $\G_n\to\G$, 
let $E_{\G_n}$ denote the collection of all bounded length quasi-circles of $\G_n.$ We define $\Psi_\G=\overline{\cup E_{\G_n}}^{d_F}$ to be the closure of the set of  bounded length quasi-circles of $\G_n$ in  $\mathscr{R}(S^1,W)$.
\begin{prop}\label{compact}
The subspace $\Psi_\G$ is compact.
\end{prop}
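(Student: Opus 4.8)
The plan is to prove compactness of $\Psi_\G$ by showing it is a closed and totally bounded subset of the complete metric space $\mathscr{R}(S^1,W)$ (completeness being Proposition \ref{curves-space}), equivalently by extracting from an arbitrary sequence in $\Psi_\G$ a $d_F$-convergent subsequence. Since $\Psi_\G=\overline{\cup E_{\G_n}}^{d_F}$ is closed by construction, the only real content is sequential compactness of $\cup_n E_{\G_n}$ up to closure, so it suffices to show: every sequence of bounded-length quasi-circles $\eta_{n_k}\in E_{\G_{n_k}}$ has a subsequence converging in $d_F$ to some curve in $\mathscr{R}(S^1,W)$.

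First I would establish a \emph{uniform length bound}: there is $L<\infty$ with $\ell(\eta_n)\le L$ for all the quasi-circles under consideration. This is where the hypothesis $\{\G_n\}\subset\mathfrak{H}$ (all Hausdorff dimensions $<H_c<1$) enters. By Proposition \ref{rec} and the Poincaré-series estimate $\mu^1(\eta_{\G_n})\asymp\mu^1(\zeta_n)\sum_k|\g'_{n,k}|$, the length of a quasi-circle is controlled by the length of its generating curve times the Poincaré sum of $\G_n$. Using the quasiconformal uniformization $\G_n=f_n(\G_c)$ with $f_n\to f$, one picks the generating curves $\zeta_n=f_n(\zeta_c)$ as images of a fixed generating curve $\zeta_c$ of $\G_c$; then $\ell(\zeta_n)$ is uniformly bounded (the $f_n$ converge, hence have uniformly bounded distortion on a fixed compact set containing $\zeta_c$), and the Poincaré sums $\sum_k|\g'_{n,k}|$ are uniformly bounded because $\mathfrak{D}_{\G_n}$ stays bounded away from $1$ and the exponents of convergence vary continuously (indeed the relevant series are dominated uniformly for dimensions $\le H_c<1$). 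This gives the uniform bound $L$. Combined with Lemma \ref{sequence}, all $\eta_n$ lie in a single $\mathscr{R}(S^1,W)$ with uniform length $\le L$.

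Next I would produce, as in the proof of Proposition \ref{curves-space}, arclength (or constant-speed) parametrizations $\sigma_n:S^1\to\mathbb{C}$ of $\eta_n$; since $\ell(\eta_n)\le L$ these are Lipschitz with a common Lipschitz constant $\le L/2\pi$, and their images all lie in the fixed compact $W'\supset W$ containing every curve in play. By Arzelà–Ascoli the family $\{\sigma_n\}$ is equicontinuous and uniformly bounded, so a subsequence converges uniformly to a Lipschitz map $\sigma_\infty:S^1\to\mathbb{C}$; lower semicontinuity of length under uniform convergence gives $\ell(\sigma_\infty)\le L<\infty$, so $\sigma_\infty$ is rectifiable, its image meets $W$ (closedness of $\{h(S^1)\cap W\ne\emptyset\}$ under uniform limits, using that the limit sets $\Lambda_{\G_n}\to\Lambda_\G$ remain in $W$), hence $\sigma_\infty\in\mathscr{R}(S^1,W)$. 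Uniform convergence of parametrizations plus convergence of lengths yields $d_F(\eta_{n_k},\sigma_\infty)\to0$. Therefore $\sigma_\infty\in\Psi_\G$ and $\Psi_\G$ is sequentially compact, hence compact.

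The main obstacle is the uniform length bound in the first step: one must genuinely use that the Hausdorff dimensions are bounded away from $1$ (not merely $<1$ individually), so that the Poincaré sums $\sum_k|\g'_{n,k}|$ do not blow up as $\G_n\to\G$ — this is exactly why the standing hypothesis $\mathfrak{D}_{\G_n}\le H_c$ with $H_c<1$ is invoked, and it should be phrased via uniform domination of the Poincaré series over the compact family of dimensions $[0,H_c]$, together with the uniform control on $\ell(f_n(\zeta_c))$ coming from $f_n\to f$. Once that bound is in hand, the rest is a routine Arzelà–Ascoli / lower-semicontinuity argument essentially identical to Proposition \ref{curves-space}.
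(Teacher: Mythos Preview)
Your core argument---reparametrize by constant speed to get a uniformly Lipschitz, uniformly bounded family, apply Arzel\`a--Ascoli, and use that $\Psi_\G$ is closed by construction---is exactly the paper's proof. The paper's argument is three lines: bounded length $\Rightarrow$ bounded Lipschitz constants $\Rightarrow$ Arzel\`a--Ascoli, together with containment in a fixed compact subset of $\mathbb{C}$.

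Where you diverge is your first step, the Poincar\'e-series derivation of a uniform length bound. This is unnecessary: in the paper the phrase ``bounded length'' in the definitions of $\mathscr{R}(S^1,W)$ and of $E_{\G_n}$ already means \emph{uniformly} bounded by a fixed constant (this reading is forced by the proof of Proposition~\ref{curves-space}, which passes directly from ``bounded length'' to ``bounded Lipschitz constants''). So the bound $L$ is definitional, not something to be proved. Moreover, your argument for it does not actually cover $E_{\G_n}$: you bound only those quasi-circles whose generating curves have the special form $\zeta_n=f_n(\zeta_c)$ for one fixed $\zeta_c$, whereas $E_{\G_n}$ contains \emph{all} bounded-length quasi-circles of $\G_n$, built from arbitrary generating curves. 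Drop that step and your proof coincides with the paper's. (One residual subtlety, present in both your write-up and the paper's, is that Arzel\`a--Ascoli plus lower semicontinuity of length gives uniform convergence and $\ell(\sigma_\infty)\le\liminf\ell(\eta_{n_k})$, but $d_F$-convergence also requires $\ell(\eta_{n_k})\to\ell(\sigma_\infty)$; neither argument supplies this explicitly.)
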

\begin{proof}
Curves in $\Psi_\G$ are bounded length and we have parametrization with bounded Lipschitz constants. It follows from Ascoli-Arzela theorem we have uniform convergence topology on $\Psi_\G$. 
Since curves in $\mathscr{R}(S^1,W)$ are all contained in some large compact set of $\mathbb{C},$ we have 
 $\Psi_\G$ is closed and bounded, hence compact. 
\end{proof}
We define $\mathscr{O}(\eta)$, open sets about $\eta\in\Psi_\G$ in relative topology given by $O(\eta)\cap\Psi_\G$ for some open set $O(\eta)\subset\mathscr{R}(S^1,W).$ \par

Let $\G$ be a Schottky group. Let $\mathcal{F}$ be a fundamental domain of $\G.$ For $\mathscr{O}(\eta_\G)$ of $\eta_\G\in\Psi_\G$, and suppose every element 
is quasi-circle, and let $\zeta_\xi$ denote a generating curve of  $\xi\in\mathscr{O}(\eta_\G)$ with respect to $\mathcal{F}.$ Then we have, $\mathscr{O}(\zeta)=\cup_{\xi\in\mathscr{O}(\eta_\G)}\zeta_\xi$ the collection of all
generating curves of the open set $\mathscr{O}(\eta_\G)$ gives a open set of generating curves of $\eta_\G.$ On set of collection of all generating curves of elements of $\Psi_\G$, we define the topology as $\xi_{\eta_n}\to\xi_\eta$, if and only if $\eta_n\to\eta$ for $\eta_n,\eta\in\Psi_\G.$
\par

 We will sometime denote by $\eta_\infty\in\partial\Psi_\G$ a curve which is the limit of rectifiable quasi-circles of $\{\G_n\}.$
 
 \begin{prop}\label{seq-curve}
Let $\{\G_n\}\subset\mathfrak{H}$ with $\G_n\to\G.$ 
Then every bounded length quasi-circle $\eta_\G$ of $\G$ is in $\Psi_\G.$ In addition, if $\eta_n$ are linear then $\eta_\G$ is  linear quasi-circles of $\G.$  \end{prop}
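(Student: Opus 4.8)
The plan is to produce, for a given bounded length quasi-circle $\eta_\G$ of $\G$, an explicit sequence of bounded length quasi-circles $\eta_n$ of $\G_n$ with $\eta_n\to\eta_\G$ in $d_F$; membership in $\Psi_\G=\overline{\cup E_{\G_n}}^{d_F}$ then follows by definition. First I would fix a fundamental domain $\mathcal{F}$ of $\G$ and, by Proposition \ref{marking}, a generating curve $\zeta=\xi\cup\bar\xi$ for $\eta_\G$ with respect to $\mathcal{F}$; since $\eta_\G$ has bounded length and $\mu^1(\Lambda_\G)=0$ (as $\mathfrak{D}_\G=H_c<1$), the generating curve $\zeta$ is itself rectifiable. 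Next I would invoke Proposition \ref{domain} to get fundamental domains $\mathcal{F}_n$ of $\G_n$ with $\mathcal{F}_n\to\mathcal{F}$; concretely $\partial\mathcal{F}_n=f_n^{-1}(f(\partial\mathcal{F}))$ using the quasiconformal maps with $f_n\to f$ and $\G=f(\G_c)$, $\G_n=f_n(\G_c)$.

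The heart of the construction is to transport $\zeta$ to a generating curve $\zeta_n$ of a quasi-circle of $\G_n$: set $\zeta_n=f_n\circ f^{-1}(\zeta)$, adjusted on the boundary arcs so that the combinatorial conditions in the definition of a generating curve (arcs connecting $p_i\in c_i^{(n)}$ to $p_i'\in c_{i+g}^{(n)}$ and the prescribed boundary arcs on $c_{i+g}^{(n)}$ matching under $\g_i^{(n)}$) hold exactly. Since $f_n\circ f^{-1}\to \mathrm{id}$ uniformly on compact sets and these maps are uniformly quasiconformal, $\zeta_n\to\zeta$ and, crucially, $\ell(\zeta_n)\to\ell(\zeta)<\infty$, so $\zeta_n$ is rectifiable for all $n$. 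By Proposition \ref{rec} (applicable since $\mathfrak{D}_{\G_n}<1$), the associated $\eta_n:=\eta_{\G_n}$ generated by $\zeta_n$ is a rectifiable, i.e.\ bounded length, quasi-circle of $\G_n$, hence $\eta_n\in E_{\G_n}$.

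It then remains to show $\eta_n\to\eta_\G$ in $d_F$. Using the estimate recorded before Proposition \ref{curves-space}, namely $d_F(\eta^1_\G,\eta^2_\G)\le M(\inf_{\sigma^1,\sigma^2}|\xi(\sigma^1_k)-\zeta(\sigma^2_k)|+|\ell(\xi)-\ell(\zeta)|)$, continuity of the quasi-circle with respect to its generating curve reduces the problem to $\zeta_n\to\zeta$ and $\ell(\zeta_n)\to\ell(\zeta)$, both already established; one also uses $\Lambda_{\G_n}\to\Lambda_\G$ and Lemma \ref{sequence} to keep everything inside a fixed compact $W$ and in $\mathscr{R}(S^1,W)$. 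For the final sentence, if each $\eta_n$ is linear, then $\zeta_n$ consists of circular arcs or line segments; since $f_n\circ f^{-1}\to\mathrm{id}$ and the limiting configuration is $\zeta$, passing to the limit the arcs of $\zeta$ are circular arcs, lines, or degenerate to points, so $\eta_\G$ is linear.

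\textbf{Main obstacle.} The delicate point is the uniform control of \emph{arclength} under the correction step: $f_n\circ f^{-1}\to\mathrm{id}$ uniformly does not by itself force $\ell(\zeta_n)\to\ell(\zeta)$, and the boundary adjustments (to make $\zeta_n$ genuinely a generating curve for $\G_n$, with endpoints matched under the perturbed generators $\g_i^{(n)}$) must be shown to have vanishing length contribution. I expect to handle this by choosing the correction supported in shrinking neighborhoods of the finitely many endpoints $p_i,p_i'$ on $\partial\mathcal{F}$, using that $\partial\mathcal{F}_n\to\partial\mathcal{F}$ in a $C^1$-type sense away from the limit set together with lower semicontinuity of length and the a priori bound $\sup_n\ell(\zeta_n)<\infty$ coming from quasiconformal distortion estimates, to upgrade weak convergence of the curves to convergence of lengths.
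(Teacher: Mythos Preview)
Your approach is essentially the paper's: transport $\eta_\G$ (equivalently its generating curve $\zeta$) to $\G_n$ via the quasiconformal conjugating maps, check rectifiability via the Poincar\'e-series estimate, and conclude $\eta_n\to\eta_\G$. One simplification you are missing: since $h_n:=f_n\circ f^{-1}$ conjugates $\G$ to $\G_n$ \emph{exactly}, the curve $h_n(\eta_\G)$ is already $\G_n$-invariant and contains $h_n(\Lambda_\G)=\Lambda_{\G_n}$, so $h_n(\zeta)$ is automatically a generating curve for $\G_n$ with the correct endpoint matchings under $\g_i^{(n)}=h_n\g_i h_n^{-1}$; no boundary ``adjustment'' is needed. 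Your ``main obstacle'' (arclength control under $h_n$) is genuine---quasiconformal maps need not preserve rectifiability---and the paper handles it with the same move you anticipate, replacing $\zeta_n$ by a nearby rectifiable curve (``modify $\zeta_n$ if necessary'') and then invoking the Poincar\'e-series bound; the linearity clause is argued identically.
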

 
\begin{proof}
Let $\G_n=f^{-1}_n(\G).$ Note that since $\mathfrak{D}_{\G_n}<H_c$, we have $\mathfrak{D}_{\G}\le H_c<1.$
Define $\eta_n=f^{-1}_n(\eta_\G),$ for all $n.$ Then $\{\eta_n\}$ is a sequence of Jordan closed curves.
It follows from Proposition \ref{marking}, we have a generating curve $\zeta$ of $\eta_\G.$ So $\eta_\G=\Lambda_\G\cup\cup_{\g\in\G}\g(\zeta),$  and we have
$\eta_n=f^{-1}_n(\Lambda_\G)\cup f^{-1}_n(\cup_{\g\in\G}\g(\zeta)).$ Since $\Lambda_{\G_n}=f^{-1}_n(\Lambda_\G)$ and 
$f^{-1}_n(\cup_{\g\in\G}\g(\zeta))=\cup_{\g\in \G} f^{-1}_n\g f_n (f^{-1}_n(\zeta))$ so its $\cup_{\g_n\in\G_n}\g_n(\zeta_n),$ where $\zeta_n=f^{-1}_n(\zeta).$ Hence
$\zeta_n$ is a generating curve of $\eta_n$ which are quasi-circles of $\G_n$. Denote by $\zeta_\G$ a generating curve for $\eta_\G$. Since $\zeta$
is rectifiable curve, modify $\zeta_n$ if necessary,  we can assume $\{\zeta_n\}$ are rectifiable curves. \par
Let $z\in\zeta_n$, since $\mathfrak{D}_{\G_n}<H_c$, we have the $1$-dimension Hausdorff measure $\mu^1(\eta_n)$:
\[\mu^1(\eta_n)\asymp\mu^1(\zeta_n)\sum_{\g\in\G_n}|\g'(z)|<\infty,\]
where $\g'$ is the derivative of $\g.$
Hence $\{\eta_n\}$ are rectifiable quasi-circles. It follows that there exists $c>0$ such that $\mu^1(\eta_n)<c\mu^1(\eta_\G)$ for large $n$, hence 
$\{\eta_n\}$ are bounded quasi-circles of $\mathscr{R}(S^1,W)$, and by Proposition \ref{compact}, we have $\eta_n\to \eta_\G$ and  $\eta_\G\in\Psi_\G.$
Finally, if $\eta_n$ are linear then $\zeta_n$ are linear and since Mobius maps preserves linearity, we have $\eta_\G$ is linear. 
\end{proof}

 \begin{defn}[Good-sequences] \label{good}
 For a given sequence $\{\eta_n\}$ of quasi circles of $\{\G_n\}\subset\mathfrak{H}$ with $\G_n\to\G$, we say $\{\eta_n\}$ is a 
 \emph{good-sequence} of quasi-circles if, it is convergent sequence and $\eta_\infty$ is a quasi-circle of $\G.$ We also call $\{\eta_n\}$ (non)transverse good-sequence if all $\eta_n$ are also (non)transverse.
 \end{defn}
 
 \begin{lem}[Existence]\label{good}
 Let $\{\G_n\}\subset\mathfrak{H}$ be a sequence of Schottky groups with $\G_n\to \G.$ There exists a good-sequence 
 $\{\eta_n\}$ of quasi-circles of $\{\G_n\}.$ In addition, if $\{\eta_n\}$ is also (non)transverse then $\eta_\infty$ is (non)transverse.

 \end{lem}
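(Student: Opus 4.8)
The plan is to build the good-sequence explicitly from a single generating curve of $\G$, pulling back by the quasiconformal maps $f_n$, and then to verify that the resulting curves converge in $\mathscr{R}(S^1,W)$ with limit a quasi-circle of $\G$. First I would fix a classical fundamental domain framework: since $\G_n\to\G$, write $\G_n = f_n^{-1}(\G)$ with $f_n\to f$ quasiconformal (as in the setup preceding Proposition~\ref{seq-curve}), and by Proposition~\ref{marking} choose a generating curve $\zeta$ for some bounded-length quasi-circle $\eta_\G$ of $\G$ — such $\eta_\G$ exists because $\mathfrak{D}_\G\le H_c<1$ and Theorem~\ref{Bowen} supplies a rectifiable quasi-circle, whose generating curve one may take rectifiable (Proposition~\ref{rec}). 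Then set $\zeta_n=f_n^{-1}(\zeta)$, which is a generating curve for a quasi-circle $\eta_n$ of $\G_n$, exactly as computed in the proof of Proposition~\ref{seq-curve}.

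Next I would control the lengths. Because $\mathfrak{D}_{\G_n}<H_c<1$, the Poincaré series of $\G_n$ converges, so by the estimate $\mu^1(\eta_n)\asymp \mu^1(\zeta_n)\sum_{\g\in\G_n}|\g'(z)|$ (Proposition~\ref{rec}, Proposition~\ref{seq-curve}) each $\eta_n$ is rectifiable with bounded length; moreover $\zeta_n=f_n^{-1}(\zeta)\to f^{-1}(\zeta)$ uniformly and the $1$-dimensional measures of $\zeta_n$ stay bounded since $f_n$ has uniformly bounded dilatation on a neighborhood of the relevant compact set. Uniform convergence $f_n\to f$ on compacta, together with continuity of the Hausdorff-dimension (hence Poincaré-series) data on $\mathfrak{J}_g$, gives a uniform bound $\mu^1(\eta_n)\le c\,\mu^1(\eta_\G)$ for large $n$. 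Hence $\{\eta_n\}\subset\mathscr{R}(S^1,W)$ for a common compact $W\supset\bigcup_n\Lambda_{\G_n}\cup\Lambda_\G$ (Lemma~\ref{sequence}), and by compactness of $\Psi_\G$ (Proposition~\ref{compact}) a subsequence converges; by construction the limit is $f(f^{-1})$-image of $\eta_\G$ built from $\zeta_\infty=f^{-1}(\zeta)$, which is precisely a generating curve of a quasi-circle $\eta_\infty$ of $\G$. So $\{\eta_n\}$ — or a subsequence, which we relabel — is a good-sequence.

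For the transversality addendum, I would choose $\zeta$ from the start to meet $\partial\mathcal F$ orthogonally with no parallel arc, as in the proof that transverse quasi-circles exist. Orthogonality and the parallel-free condition are not preserved by general quasiconformal maps, so one cannot expect $\eta_n$ transverse automatically; instead I would exploit the freedom in the choice of generating curve for $\G_n$: within the fundamental domain $\mathcal F_n$ (Proposition~\ref{domain}) one replaces the pulled-back arc $\zeta_n$ by the arc with the same endpoints on $\partial\mathcal F_n$ that meets $\partial\mathcal F_n$ orthogonally — this changes $\eta_n$ only by a bounded-length, uniformly small perturbation as $n\to\infty$ (since $\mathcal F_n\to\mathcal F$ and the endpoint data converge), so the modified sequence is still a good-sequence, now transverse; the non-transverse case is handled symmetrically by choosing $\zeta$ with a parallel arc along $\partial\mathcal F$. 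The main obstacle I anticipate is the uniform length control: ensuring $\sum_{\g\in\G_n}|\g'(z)|$ stays bounded as $n\to\infty$ requires that the Poincaré exponents $\mathfrak{D}_{\G_n}$ stay bounded away from $1$, which is where the hypothesis $\{\G_n\}\subset\mathfrak H$ (all of dimension $<H_c<1$) is essential; without a uniform gap the lengths could blow up and the limit would escape $\mathscr{R}(S^1,W)$.
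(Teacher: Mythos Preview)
Your existence argument is the paper's proof with the details filled in: the paper simply writes $\G_n=f_n(\G)$, picks a quasi-circle $\eta_\G$ of $\G$, sets $\eta_n=f_n(\eta_\G)$, and declares this a good-sequence in one line, whereas you supply the rectifiability, uniform length bound, and convergence in $\mathscr{R}(S^1,W)$ by invoking Propositions~\ref{rec}, \ref{seq-curve}, \ref{compact} and Lemma~\ref{sequence}. So on the main claim your route is the same, just more explicit.

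For the addendum there is a small mismatch of interpretation. The clause is a conditional: \emph{if} the good-sequence $\{\eta_n\}$ is (non)transverse, \emph{then} $\eta_\infty$ is (non)transverse. The paper's entire argument is the sentence ``the (non)transverse property obviously is preserved,'' i.e.\ orthogonal/parallel incidence data of $\eta_n$ with $\partial\mathcal F_n$ pass to the limit since $\mathcal F_n\to\mathcal F$ and $\eta_n\to\eta_\infty$. Your proposal instead \emph{constructs} a (non)transverse good-sequence by straightening the pulled-back arcs near $\partial\mathcal F_n$; this is a slightly stronger, different statement than what the lemma literally asserts. Your concern that quasiconformal $f_n$ need not preserve orthogonality is legitimate and makes your treatment more careful than the paper's, but to match the lemma as stated you only need the limit-preservation direction, not the construction.
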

 
 \begin{proof}
Let $f_n$ be quasi-conformal maps such that $\G_n=f_n(\G).$ Let $\eta_\G$ be a quasi-circle of $\G.$ Then $\eta_n=f_n(\eta_\G)$ is a good-sequence of quasi-circles. The (non)transverse property obviously is preserved. 

 \end{proof}
 
 \begin{cor}[Linear-invariant]\label{linear-invariant}
 Let $\{\eta_n\}$ be a good-sequence of linear quasi-circles of $\{\G_n\}\subset\mathfrak{H}.$  $\eta_\infty$ is a linear quasi-circle of $\G.$ 
 \end{cor}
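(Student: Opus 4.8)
The plan is to combine the two results immediately preceding it: Proposition \ref{seq-curve}, which says that a convergent good-sequence $\{\eta_n\}$ of quasi-circles produces a limit $\eta_\infty$ that is a quasi-circle of $\G$, together with the already-established fact that linearity is a Möbius invariant. So the first step is to observe that, since $\{\eta_n\}$ is a good-sequence of quasi-circles of $\{\G_n\}$ with $\G_n\to\G$, by Definition \ref{good} the limit $\eta_\infty$ is a quasi-circle of $\G$, and $\eta_\infty\in\Psi_\G$ by Proposition \ref{seq-curve}. It therefore remains only to verify that $\eta_\infty$ is \emph{linear}, i.e. that $\eta_\infty\setminus\Lambda_\G$ consists of points, circular arcs, or lines.

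For this I would pass to generating curves. By Proposition \ref{marking} each $\eta_n$ has a generating curve $\zeta_n$ with respect to a chosen fundamental domain $\mathcal{F}_n$, where by Proposition \ref{domain} we may take $\mathcal{F}_n\to\mathcal{F}$ a fundamental domain of $\G$. Since each $\eta_n$ is linear, $\zeta_n$ may be taken to consist of circular arcs and line segments meeting $\partial\mathcal{F}_n$; the number of such arcs is bounded (it is controlled by $g$ and by the combinatorics of $\mathcal{F}$, which is fixed along the sequence). Then $\zeta_n\to\zeta_\infty$ in the topology on generating curves defined in Section 3, and a uniformly bounded family of circular arcs and line segments converges (after passing to a subsequence for each arc) to a circular arc, a line segment, or a point — the space of circles-or-lines in $\overline{\mathbb{C}}$ is compact. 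Hence $\zeta_\infty$ is again a finite union of points, circular arcs and lines, i.e. $\zeta_\infty$ is a linear generating curve, and $\eta_\infty=\Lambda_\G\cup\bigcup_{\g\in\G}\g(\zeta_\infty)$ is built from $\zeta_\infty$ by applying the Möbius elements of $\G$, which preserve the class of points/circular arcs/lines. Therefore $\eta_\infty\setminus\Lambda_\G$ consists of points, circular arcs, and lines, so $\eta_\infty$ is a linear quasi-circle of $\G$.

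The one point requiring care — the main obstacle — is the degeneration of the generating arcs: a priori a sequence of genuine circular arcs can converge to a point, or a sequence of circular arcs with radii $\to\infty$ can converge to a line, and one must make sure that these limits stay inside the allowed class and that no arc "escapes" or becomes non-rectifiable in the limit. This is handled by the uniform length bound built into the definition of $\mathscr{R}(S^1,W)$ (giving uniformly bounded Lipschitz parametrizations, as in Propositions \ref{curves-space} and \ref{compact}) together with the compactness of the space of circles-and-lines in $\overline{\mathbb{C}}$; since all the relevant geometry is trapped in the compact set $W$ and the arc count is uniformly bounded, the limiting configuration is again a finite linear configuration. Once that is in hand, the corollary follows immediately from Proposition \ref{seq-curve} and the Möbius-invariance of linearity.
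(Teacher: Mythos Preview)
Your proposal is correct and follows the same route as the paper, only with far more detail: the paper's entire proof is the single sentence ``Linearity is obviously preserved at $\eta_\infty$,'' relying implicitly on the M\"obius-invariance of the class of circular arcs/lines already invoked at the end of Proposition~\ref{seq-curve}. Your generating-curve argument and the compactness discussion of circles-and-lines simply spell out why that one-liner is justified.
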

 
 \begin{proof}
 Linearity is obviously preserved at $\eta_\infty.$
 \end{proof}
 
 \begin{cor}[Tranverse-invariant]\label{transverse}
  Let $\{\eta_n\}$ be a good-sequence of quasi-circles of $\{\G_n\}\subset\mathfrak{H}.$  Then $\eta_\infty$ is a (non)transverse linear quasi-circle of $\G$ if and only if $\{\eta_n\}$ is a (non)transverse. 
 \end{cor}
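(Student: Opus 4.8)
The statement to prove is Corollary~\ref{transverse}: for a good-sequence $\{\eta_n\}$ of quasi-circles of $\{\G_n\}\subset\mathfrak{H}$ with $\G_n\to\G$, the limit $\eta_\infty$ is a (non)transverse linear quasi-circle of $\G$ if and only if $\{\eta_n\}$ is (non)transverse.

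The plan is to combine the linear-invariance already established in Corollary~\ref{linear-invariant} with the behaviour of orthogonality and parallel arcs under the convergence $\G_n\to\G$. First I would note that by Proposition~\ref{domain} there is a sequence of fundamental domains $\mathcal{F}_n=f_n(\mathcal{F})\to\mathcal{F}$ (with $\mathcal{F}$ a fundamental domain of $\G$), so that $\partial\mathcal{F}_n\to\partial\mathcal{F}$ and, after passing to generating curves, $\zeta_{\eta_n}=f_n(\zeta_{\eta_\G})\to\zeta_{\eta_\infty}$ in the Fréchet topology. Since each $\eta_n$ is linear, Corollary~\ref{linear-invariant} gives that $\eta_\infty$ is linear, so $\eta_\infty\cap\mathcal{F}$ and $\eta_\infty\cap\partial\mathcal{F}$ consist of circular arcs and line segments; this is the linear half of the conclusion and needs no further argument.

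For the transversality equivalence, I would argue in both directions. If each $\eta_n$ is transverse, then for each $n$ the arcs of $\eta_n$ meet $\partial\mathcal{F}_n$ orthogonally and $\eta_n$ has no parallel arc. Orthogonality is a closed condition on tangent directions: the angle of intersection of $\zeta_{\eta_n}$ with $\partial\mathcal{F}_n$ is a continuous function of the curves in the $d_F$-topology (using that all curves involved are uniformly Lipschitz with uniformly bounded Lipschitz constants, as in Proposition~\ref{compact}), so the limiting arcs of $\eta_\infty$ meet $\partial\mathcal{F}$ at angle $\pi/2$ as well. The absence of a parallel arc in the limit is the subtler point and is where the main obstacle lies: a priori a sequence of transverse arcs could ``flatten'' onto $\partial\mathcal{F}$ in the limit and produce a parallel arc. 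I would rule this out by using that $\eta_\infty$ meets $\partial\mathcal{F}$ orthogonally wherever it meets it — an arc contained in $\partial\mathcal{F}_\G$ cannot simultaneously be orthogonal to $\partial\mathcal{F}_\G$ at its endpoints unless it is degenerate — together with rectifiability bounds (Proposition~\ref{rec}, Proposition~\ref{seq-curve}) which prevent the total length of the portion of $\eta_\infty$ near $\partial\mathcal{F}$ from absorbing a positive-length parallel piece. Hence $\eta_\infty$ is a transverse linear quasi-circle. Conversely, if some $\eta_n$ is non-transverse — either failing orthogonality at $\partial\mathcal{F}_n$ for every choice of fundamental domain, or possessing a parallel arc — I would show, again by continuity of the intersection angle and by the fact that $f_n^{-1}$ carries a parallel arc of $\eta_n$ to a parallel arc of $\eta_\infty$ (Möbius maps and their quasiconformal limits preserving the incidence relation ``arc lies in $\partial\mathcal{F}$''), that $\eta_\infty$ inherits the corresponding defect, hence is non-transverse.

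The main obstacle, as indicated, is controlling the parallel-arc condition under the limit: transversality is not simply the negation of a closed condition, since one must exclude both the appearance and the disappearance of parallel arcs. I expect to handle this by working with the generating curves $\zeta_{\eta_n}$ inside the fundamental domains $\mathcal{F}_n$, where each generating arc has two well-defined endpoints on $\partial\mathcal{F}_n$ and a well-defined tangent there; the orthogonality at both endpoints, which persists in the limit by continuity, is incompatible with the arc degenerating into $\partial\mathcal{F}$, and this rigidity is what forces the transversality type to be preserved exactly. The remaining directions are then routine consequences of Corollary~\ref{linear-invariant} and the continuity of the Fréchet-convergence of generating curves already recorded before Proposition~\ref{curves-space}.
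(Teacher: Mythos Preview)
The paper's own proof of this corollary is literally empty: between \verb|\begin{proof}| and \verb|\end{proof}| there is nothing. The intended justification is evidently that the claim is immediate from the sentence already inserted into the proof of Lemma~\ref{good} (``The (non)transverse property obviously is preserved'') together with Corollary~\ref{linear-invariant}; the author treats the transversality invariance as self-evident once one knows that good-sequences exist and that linearity passes to the limit.

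Your proposal therefore does far more work than the paper does. Where the paper asserts the result without argument, you actually attempt to justify the two directions: continuity of the intersection angle along $\partial\mathcal{F}_n\to\partial\mathcal{F}$, and the incompatibility of a limiting parallel arc with orthogonal incidence at its endpoints. That is a genuine addition, and the overall shape of your argument is sound. Two points are worth tightening. First, angle continuity in the Fr\'echet topology is not automatic for arbitrary rectifiable curves; it is the \emph{linearity} hypothesis (so that the relevant arcs are circular arcs or segments, determined by finitely many parameters) that makes tangent directions vary continuously---you should state this explicitly rather than appeal to Lipschitz bounds alone. Second, your remark that ``M\"obius maps and their quasiconformal limits'' preserve the incidence relation is imprecise: the $f_n$ are only quasiconformal and need not preserve angles or circles. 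What you actually use is that $\partial\mathcal{F}_n\to\partial\mathcal{F}$ and $\eta_n\to\eta_\infty$ simultaneously, so an arc of $\eta_n$ lying in $\partial\mathcal{F}_n$ has its limit lying in $\partial\mathcal{F}$; phrase it that way and the step is clean.
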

 \begin{proof}
 \end{proof}

\begin{lem}[Open]\label{open}
Let $\{\G_n\}\subset\mathfrak{H}$. Let $\G_n\to \G$ be a Schottky group. Let $\eta_\G$ be a quasi-circle of  $\G.$ Then
there exists a open neighboredood $\mathscr{O}(\eta_\G)$ of $\eta_\G$ in $\Psi_\G$ such that every elements of $\mathscr{O}(\eta_\G)$ is a quasi-circle of
$\G.$ 

\end{lem}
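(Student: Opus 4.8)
The plan is to show that quasi-circularity is an open condition in $\Psi_\G$ by exploiting the structure given by generating curves with respect to a fixed fundamental domain $\mathcal{F}$ of $\G$. First I would fix $\mathcal{F}$ bounded by the $2g$ disjoint Jordan curves $\{c_i\}_1^{2g}$, and use Proposition \ref{marking} to obtain a generating curve $\zeta$ of $\eta_\G$; thus $\zeta$ is a finite collection of disjoint arcs joining the $c_i$ in the prescribed combinatorial pattern, meeting $\partial\mathcal{F}$ only at the endpoints $p_i,p_i',\gamma_i(p_i),\gamma_i(p_i')$. The key geometric observation is that the conditions defining a generating curve — the arcs are pairwise disjoint, their closures meet $\partial\mathcal{F}$ only at the required endpoints, and the endpoints match up under the generators $\gamma_i$ — are all \emph{stable under small Fréchet perturbations}: if $\xi \in \mathscr{R}(S^1,W)$ is $d_F$-close to $\eta_\G$, then setting $\zeta_\xi = \xi \cap \overline{\mathcal{F}}$ (suitably interpreted, closing up endpoint mismatches along $\partial\mathcal{F}$ exactly as in the original construction of $\eta_\G$ from $\zeta$) yields a collection of arcs that still realizes the generating-curve pattern, because disjointness of the finitely many compact arcs $c_i$ and of the finitely many arcs of $\zeta$ is an open condition, and the $\gamma_i$ are fixed homeomorphisms.

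Concretely, I would proceed as follows. Step 1: since $\zeta$ consists of finitely many disjoint arcs with endpoints on $\partial\mathcal{F}$, and $\partial\mathcal{F}$ is a disjoint union of Jordan curves at positive distance from one another, choose $\epsilon_0 > 0$ smaller than all the relevant pairwise distances (between distinct arcs of $\zeta$, between $\zeta$ and the portions of $\partial\mathcal{F}$ it does not touch, and between distinct $c_i$). Step 2: let $O(\eta_\G) \subset \mathscr{R}(S^1,W)$ be the $d_F$-ball of radius $\epsilon < \epsilon_0$ about $\eta_\G$, and set $\mathscr{O}(\eta_\G) = O(\eta_\G) \cap \Psi_\G$. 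For $\xi \in \mathscr{O}(\eta_\G)$, restrict to the fundamental domain to extract $\zeta_\xi$; the Fréchet closeness forces each arc of $\zeta_\xi$ to be a small perturbation of the corresponding arc of $\zeta$, hence the arcs of $\zeta_\xi$ remain pairwise disjoint and meet $\partial\mathcal{F}$ only near the prescribed endpoints. Step 3: push these perturbed endpoints back onto $\partial\mathcal{F}$ and connect by short sub-arcs of the $c_{i+g}$ exactly as in the definition of a generating curve; this produces a genuine generating curve $\zeta'_\xi$ of $\mathcal{F}$, and $\Lambda_\G \cup \bigcup_{\gamma \in \G}\gamma(\zeta'_\xi)$ is then a $\G$-invariant closed Jordan curve containing $\Lambda_\G$ — i.e. a quasi-circle of $\G$. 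Step 4: identify this quasi-circle with $\xi$ itself: since $\xi \in \Psi_\G$ is a limit of bounded-length curves and agrees with $\G(\zeta'_\xi) \cup \Lambda_\G$ on the fundamental domain and its $\G$-translates, invariance plus $\overline{\G(\zeta'_\xi)} \setminus \G(\zeta'_\xi) = \Lambda_\G$ (as in the proof of Proposition \ref{marking}) forces $\xi = \eta_{\G}'$ up to reparametrization.

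The main obstacle I expect is Step 3–Step 4: controlling how a Fréchet-close curve $\xi$ interacts with $\partial\mathcal{F}$ when $\xi$ is only assumed rectifiable of bounded length and a priori lives in $\Psi_\G$ rather than being obviously $\G$-invariant. A curve $d_F$-close to $\eta_\G$ need not meet $\partial\mathcal{F}$ transversally or in finitely many points, so "restricting to $\overline{\mathcal{F}}$" must be done carefully — one must argue that the portion of $\xi$ inside $\overline{\mathcal{F}}$ consists of arcs Fréchet-close to the arcs of $\zeta$ (using that $\epsilon < \epsilon_0$ rules out $\xi$ wandering across $\partial\mathcal{F}$ anywhere except near the endpoints of $\zeta$), and that the resulting object genuinely generates $\xi$ under $\G$. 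Handling the bookkeeping of endpoint-matching along the $c_{i+g}$ under the generators $\gamma_i$, while keeping everything within the prescribed $d_F$-ball, is where the care is needed; once that is in place, the $\G$-invariance of the reconstructed curve and the inclusion of $\Lambda_\G$ are immediate, and the conclusion that $\xi$ is a quasi-circle of $\G$ follows.
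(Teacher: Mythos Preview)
Your approach differs from the paper's, and the obstacle you yourself flag in Step~4 is in fact a genuine gap rather than a technicality.

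The paper does not try to show that an arbitrary $\xi \in \Psi_\G$ which is $d_F$-close to $\eta_\G$ must be a quasi-circle of $\G$. Instead it constructs $\mathscr{O}(\eta_\G)$ \emph{through the approximating sequence} $\G_n$: fixing the quasiconformal maps with $\G_n = f_n(\G_c)$ and $\G = f(\G_c)$, it takes a small open set $\mathcal{O}'(\xi_c)$ of generating curves at the level of the reference group $\G_c$, pushes it forward by each $f_n$ to obtain generating curves $\xi'_n$ for $\G_n$, and lets $\eta'_n$ be the resulting quasi-circles of $\G_n$. Because $\mathcal{F}_n \to \mathcal{F}$ and the generating curves converge to elements of $f(\mathcal{O}'(\xi_c))$, the limits $\eta'_\infty$ are quasi-circles of $\G$; this family of limits is the desired open set. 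The construction thus uses in an essential way that $\Psi_\G$ is \emph{defined} as the $d_F$-closure of $\bigcup_n E_{\G_n}$.

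Your argument never invokes $\G_n$ or the maps $f_n$: you take $\xi$ in a $d_F$-ball, extract $\zeta'_\xi$ from $\xi \cap \overline{\mathcal{F}}$, form $\Lambda_\G \cup \bigcup_{\gamma \in \G}\gamma(\zeta'_\xi)$, and in Step~4 assert this coincides with $\xi$. That identification requires $\xi$ to be $\G$-invariant, and nothing in your setup supplies this: Fr\'echet closeness to $\eta_\G$ only tells you $\xi \cap \gamma(\overline{\mathcal{F}})$ is close to $\gamma(\zeta)$, not that it equals $\gamma(\xi \cap \overline{\mathcal{F}})$. Concretely, $\Psi_\G$ contains the quasi-circles $\eta_n$ of $\G_n$ themselves; for large $n$ these lie in every $d_F$-ball about $\eta_\G$ yet are $\G_n$-invariant rather than $\G$-invariant. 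Your Steps~3--4 applied to such an $\eta_n$ manufacture a genuine quasi-circle of $\G$, but it is a \emph{different} curve from $\eta_n$, so the identification fails. Closing this gap forces you to bring in the sequence $\G_n$ and the convergence $\mathcal{F}_n \to \mathcal{F}$, which is exactly the mechanism the paper uses.
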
 
\begin{proof}
Let $\mathcal{F}$ be a fundamental domain of $\G.$ Let $\xi$ be the generating curve of $\eta_\G$ with respect to $\mathcal{F}.$ 
Let $\mathcal{F}_n$ be the fundamental domain of $\G_n$ with $\mathcal{F}_n\to\mathcal{F}.$ By Proposition \ref{seq-curve}, we have $\eta_n\to\eta_\G.$ Let 
$\mathcal{U}(\xi_n)$ to be the open set about the generating curve $\xi_n$ of $\eta_n$, and set $\mathcal{O}(\xi_n)=\mathcal{U}(\xi_n)\cap\Psi_\G.$ Then $\mathcal{O}(\xi_n)$ is open
sets of generating curves in $\Psi_\G$ (Figure 2). Let $\mathcal{O}(\xi_n)=f_n(\mathcal{O}(\xi_c))$ for $\mathcal{O}(\xi_c)$ a open set of generating curves for 
$\eta_{\G_c}=f^{-1}_n(\eta_n).$
Since $\eta_\G$ is a quasi-circle, 
and $\mathcal{F}_n\to\mathcal{F}$, for large $n$ we can choose sufficiently small neighborhood $\mathcal{O}'(\xi_c)$ 
such that $f(\mathcal{O}'(\xi_c))$ is a open neighborhood of generating curves of $\eta_\G.$ 
Let $\xi'_n\in f_n(\mathcal{O}'(\xi_c))$ and $\eta_n'$ be generated by $\xi'_n.$ Assuming $\mathcal{O}'(\xi_c)$ is sufficiently small neighborhood, we
have length $\ell(\eta_n')<c\ell(\eta_\G)$ for some small $c\ge 1$ for large $n$ and $\eta_n'$ generated by all $\xi'_n\in f_n(\mathcal{O}'(\xi_c)).$ 
Since $\eta_n\to\eta_\G$, we have $\eta'_\infty$ is quasi-circle of $\G$ with bounded length. This defines a open set which all elements are quasi-circles in $\Psi_\G$. \end{proof}
\begin{figure}[ht!]
\centering
\includegraphics[width=35mm]{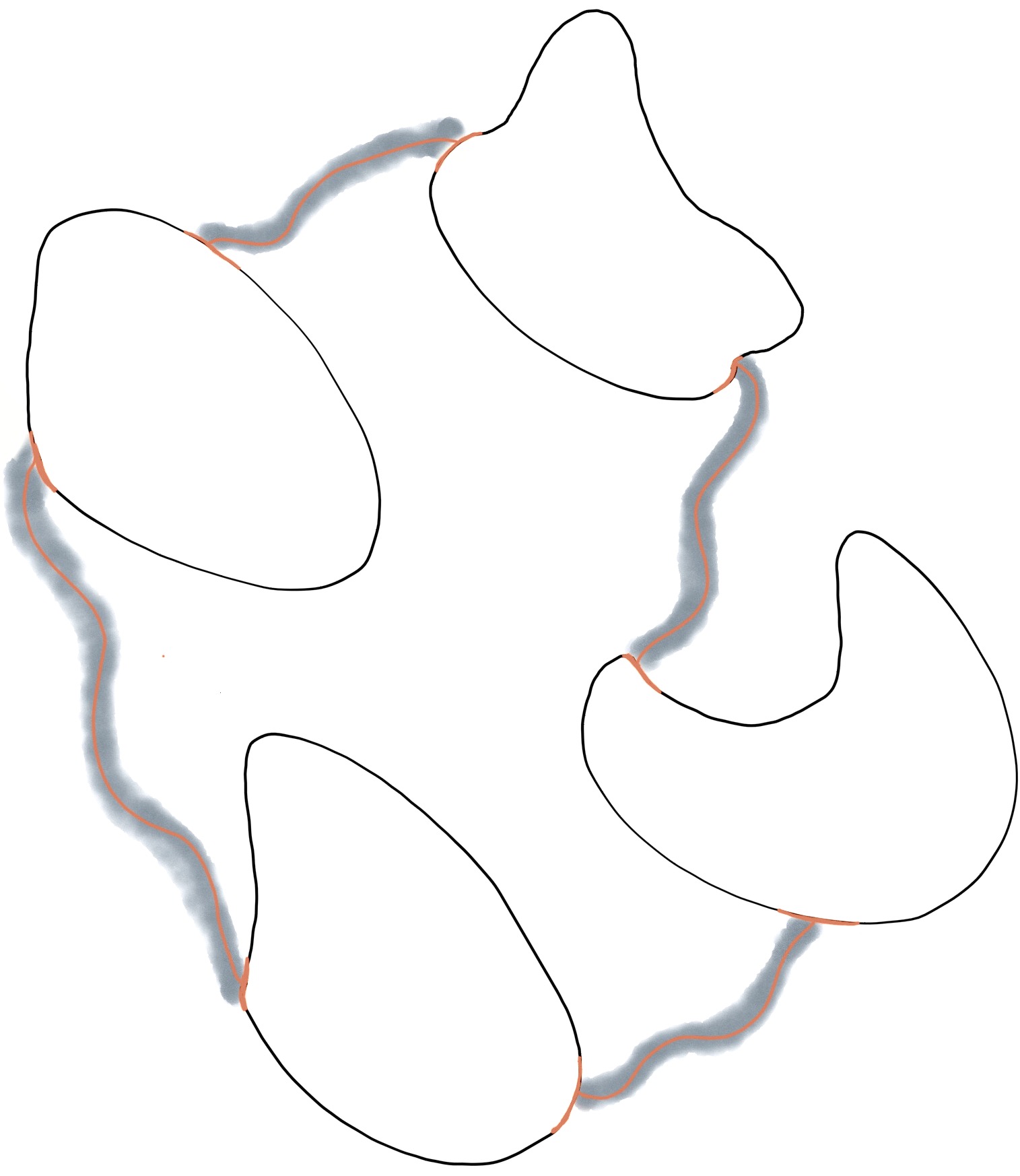}  
\caption{Open set of generating curves about $\zeta$ \label{overflow}}
\end{figure}
\begin{cor}\label{open-seq}
Let $\{\G_n\}\subset\mathfrak{H}$ with $\G_n\to\G$ a Schottky group $\G.$ Let $\{\eta_n\}$ be a good-sequence of quasi-circles of $\{\G_n\}.$ Then
there exists an open neighborhood $\mathscr{O}(\eta_\infty)$ of $\eta_\infty$ such that every element of $\mathscr{O}(\eta_\infty)$ is a quasi-circle of
$\G.$
\end{cor}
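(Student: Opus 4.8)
The plan is to deduce this immediately from Lemma \ref{open}, viewing the corollary as the specialization of that lemma to the limit curve of a good-sequence. First I would record the two things that the definitions hand us for free. By Definition \ref{good} of a good-sequence, the sequence $\{\eta_n\}$ converges in $\mathscr{R}(S^1,W)$ and its limit $\eta_\infty$ is a quasi-circle of $\G$. Moreover, since $\Psi_\G=\overline{\cup E_{\G_n}}^{d_F}$ and each $\eta_n\in E_{\G_n}\subset\cup E_{\G_n}$, the $d_F$-limit $\eta_\infty$ lies in $\Psi_\G$; by Proposition \ref{compact} the space $\Psi_\G$ is compact, so there is no issue of the limit escaping the space, and the relative neighborhoods $\mathscr{O}(\eta_\infty)=O(\eta_\infty)\cap\Psi_\G$ are well defined.

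Next I would simply invoke Lemma \ref{open} with the quasi-circle $\eta_\G:=\eta_\infty$ of $\G$. The hypotheses of Lemma \ref{open} — namely $\{\G_n\}\subset\mathfrak{H}$, $\G_n\to\G$ with $\G$ a Schottky group, and a quasi-circle of $\G$ in hand — are met verbatim here. Lemma \ref{open} therefore produces an open neighborhood $\mathscr{O}(\eta_\infty)$ of $\eta_\infty$ in $\Psi_\G$ such that every element of $\mathscr{O}(\eta_\infty)$ is a quasi-circle of $\G$. This is precisely the assertion of the corollary.

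The only point requiring a word of care, rather than a genuine obstacle, is verifying that $\eta_\infty$ is actually a quasi-circle of $\G$ before applying Lemma \ref{open}; but this is built into the definition of a good-sequence, so no additional argument is needed. In short, the proof is a two-line appeal to Lemma \ref{open}, combined with the observation that the limit of a good-sequence both lies in $\Psi_\G$ (by the definition of $\Psi_\G$ as a $d_F$-closure) and is a quasi-circle of $\G$ (by Definition \ref{good}).
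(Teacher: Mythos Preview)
Your proposal is correct and matches the paper's own one-line proof, which simply cites Lemma \ref{good} and Lemma \ref{open}. The extra care you take in noting that $\eta_\infty\in\Psi_\G$ (so that the relative neighborhood $\mathscr{O}(\eta_\infty)$ makes sense) is a harmless elaboration of the same argument.
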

 \begin{proof}
Follows from Lemma \ref{good} and Lemma \ref{open}.
\end{proof}
Next we analyze the formations of singularities for a given sequence of classical Schottky groups converging to a Schottky group. These types of singularities has been
studied in \cite{Hou1,Hou}.
\begin{lem}[Singularity]\label{singular}
Let $\{\G_n\}\subset\mathfrak{H}$ with $\G_n\to\G$ a Schottky group. Assume that $\G$ is non-classical Schottky group.  Then there exists $\eta_\G$ such that,
every $\mathscr{O}(\eta_\G)$ contains a non-quasi-circle. 
\end{lem}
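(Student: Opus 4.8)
\textbf{Proof proposal for Lemma \ref{singular}.}

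The plan is to argue by contradiction at the level of quasi-circles rather than groups, using the singularity analysis for the converging classical fundamental domains together with the openness result of Lemma \ref{open}. First I would fix, by Proposition \ref{domain}, a sequence of classical fundamental domains $\mathcal{F}_n$ of $\G_n$ with $\mathcal{F}_n\to\mathcal{F}$, where $\mathcal{F}$ is a fundamental domain of the limit group $\G$. Since $\G$ is non-classical, no choice of generators makes $\partial\mathcal{F}$ a family of disjoint round circles; concretely, as $n\to\infty$ the defining circles of $\mathcal{F}_n$ must degenerate in one of the three ways listed before the statement — \emph{tangent} (two circles in the boundary family become asymptotically tangent), \emph{degenerate} (a circle's radius blows up or shrinks to zero so the limit is a line or a point), or \emph{collapsing} (two paired circles $c_i,c_{i+g}$ approach each other so that the annular region between them pinches). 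I would treat these three cases separately, the unifying mechanism being that each produces a point $q$ on the limiting configuration at which any rectifiable $\G$-invariant curve through the relevant boundary arcs is forced to behave badly.

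The core of the argument is then: in each singular case, build an explicit quasi-circle $\eta_\G$ of $\G$ (using Proposition \ref{marking} and the generalized Bowen construction, choosing the generating curve $\zeta$ to pass through the collapsing/tangent/degenerate locus) such that $\eta_\G\in\Psi_\G$ by Proposition \ref{seq-curve}, but every $\mathscr{O}(\eta_\G)$ contains an element that is \emph{not} a quasi-circle of $\G$. For the tangent case, near the tangency point of $\bar D_i$ and $\bar D_j$ the complement $\mathbb{C}\setminus\eta$ cannot remain two Jordan domains under an arbitrarily small $d_F$-perturbation: a perturbed curve is forced either to cross itself or to fail to separate the two touching disks, so it is not a Jordan curve containing $\Lambda_\G$, i.e.\ not a quasi-circle. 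For the degenerate case (a boundary curve limiting to a point $p$), the limit set $\Lambda_\G$ has a point accumulated from both sides by orbit copies of $\zeta$ with arclength not going to zero; any curve $d_F$-close to $\eta_\G$ must still absorb this accumulation, and one can exhibit nearby curves whose image omits a limit point or double-covers an arc near $p$. For the collapsing case, the pinching annulus between $c_i$ and $c_{i+g}$ forces infinitely many orbit arcs of $\zeta$ through a shrinking neck; perturbing the generating curve slightly off-orthogonal in the neck accumulates, in the $\G$-orbit, into a non-embedded limit, again producing a non-quasi-circle in every neighborhood. In all three cases I would phrase the obstruction via the contrapositive of Lemma \ref{open}: if \emph{every} $\mathscr{O}(\eta_\G)$ consisted of quasi-circles of $\G$, then by stability under the convergence $\G_n\to\G$ the boundary circles could be perturbed to disjoint round circles, contradicting non-classicality.

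I expect the main obstacle to be making the three singularity types precise and exhaustive, and in particular showing that a non-classical limit of classical groups must exhibit at least one of them in its fundamental-domain boundary — this is the place where the work of \cite{Hou1,Hou} on singularity formation of classical fundamental domains is essential, and I would cite and adapt it rather than reprove it. A secondary technical point is the passage between perturbations of the \emph{curve} $\eta_\G$ in the Fr\'echet metric on $\mathscr{R}(S^1,W)$ and perturbations of a \emph{generating curve} $\zeta$ of it: one needs that a small $d_F$-neighborhood of $\eta_\G$ in $\Psi_\G$ pulls back (via the correspondence $\xi_{\eta_n}\to\xi_\eta$ set up after Proposition \ref{compact}) to a genuine neighborhood of generating curves, so that the bad perturbation one constructs near the singular locus is actually realized by an element of $\mathscr{O}(\eta_\G)$. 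Granting those two points, the case analysis itself is geometric and self-contained: each singular geometry is an honest topological obstruction to the Jordan/separation property that defines a quasi-circle, and hence no neighborhood of the associated $\eta_\G$ can consist entirely of quasi-circles.
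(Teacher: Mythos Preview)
Your overall architecture---case split on how the classical boundary circles of $\mathcal{F}_n$ can fail to limit to disjoint round circles, then in each case exhibit a quasi-circle whose every neighborhood contains a non-quasi-circle---matches the paper. But two concrete points deviate from the paper's argument in ways that matter.

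First, your opening move via Proposition \ref{domain} is set up backwards. Proposition \ref{domain} starts from a chosen fundamental domain $\mathcal{F}$ of $\G$ and produces $\mathcal{F}_n\to\mathcal{F}$ by pulling back under the quasiconformal maps; there is no reason the resulting $\mathcal{F}_n$ have \emph{round} boundary circles. The paper instead fixes, for each $n$, a \emph{classical} fundamental domain $\mathcal{F}_n$ of $\G_n$ (which exists because $\G_n$ is classical), passes to a subsequence so that each of the $2g$ circles converges to a circle or a point, and calls the limiting region $\mathcal{G}$. Crucially $\mathcal{G}$ need not be a fundamental domain of $\G$ at all; the only thing used is that, since $\G$ is non-classical, $\partial\mathcal{G}$ cannot consist of $2g$ disjoint circles, forcing one of the three singularities. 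Your version tries to have both $\mathcal{F}_n$ classical and $\mathcal{F}_n\to\mathcal{F}$ with $\mathcal{F}$ a fundamental domain of $\G$, which you cannot arrange simultaneously.

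Second, your ``collapsing'' case is not the paper's, and your proposed treatment of it is not the paper's mechanism. In the paper, collapsing means two concentric circles (with all the other circles trapped between them) merge into a single circle $\mathcal{C}$; the argument is not a local perturbation of arcs through a pinched neck but a global one: using that $\g(\mathcal{C})$ must be either identical to or disjoint from $\mathcal{C}$ for every $\g\in\G$, one shows every fixed point of $\G$ lies on $\mathcal{C}$, hence $\Lambda_\G\subset\mathcal{C}$, hence $\G$ is Fuchsian of the second kind, hence classical by Button \cite{Button}---contradicting the hypothesis. Your annulus-pinching picture and the ``infinitely many orbit arcs through a shrinking neck'' heuristic do not obviously yield a non-quasi-circle in every $\mathscr{O}(\eta_\G)$, and in any case miss the Fuchsian/Button endgame that actually closes this case. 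The tangency and degeneration cases you sketch are in the right spirit and close to the paper's treatment (loop singularities at the distinguished point, plus the observation that a non-limit degeneration point can be missed by some nearby quasi-circle), but the collapsing case needs the different argument above.
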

Here we say a closed curve is a \emph{non-quasi-circle}, if it's non-Jordan(contains a singularity point), or it's not $\G$-invariant.
\begin{proof}
For each $n$, let $\mathcal{F}_n$ be a classical fundamental domain of $\G_n.$ 
Given a sequence of classical fundamental domains $\{\mathcal{F}_n\}$ the convergence is consider as follows: $\{\partial\mathcal{F}_n\}$ is collection of 
$2g$ circles $\{c_{i,n}\}_{i=1,...,2g}$ in the Riemann sphere $\overline{\mathbb{C}}$, pass to a subsequence if necessary, then $\lim_n c_{i,n}$, is either a point or a circle. 
We say $\{\mathcal{F}_n\}$ convergents to $\mathcal{G}$, if $\mathcal{G}$ is a region that have boundary consists of 
$\lim_n c_{i,n}$ for each $i$, which necessarily is either a point or a circle. Note that $\mathcal{G}$ is not necessarily a fundamental domain, nor it's necessarily connected.\par
Let $\lim\mathcal{F}_{n}=\mathcal{G}.$ By assumption that $\G$ is not a classical Schottky group, we have $\mathcal{G}$ is not a classical fundamental domain of $\G.$ We have $\partial\mathcal{G}$ consists of circles or points. However these circles may not be disjoint. More precisely, we have the following possible degeneration of circles of $\partial\mathcal{F}_n$ which gives $\partial\mathcal{G}$ of at least one of following singularities types:
\begin{itemize}
\item{Tangency}:
Contains tangent circles. 
\item{Degeneration}:
Contains a circles degenerates into a point. 
\item{Collapsing}:
Contains two circles collapses into one circle. Here we have two concentric circles centered at origin and rest circles squeezed in between these two and
these two collapse into a single circle in $\mathcal{G}.$

\end{itemize}
Consider $\partial\mathcal{G}$ contains a {\bf tangency}:\par
Let $p$ be a tangency point.
Let $\eta_\G$ be a quasi-circle of $\G$ which pass through point $p.$ Assume the Lemma is false, then all sufficiently small open neighborhood $\mathscr{O}(\eta_\G)$ contains only quasi-circles of $\G.$  It follows from Proposition \ref{sequence}, we have a sequence with $\eta_n\to\eta_\G$.
Let $\xi_n$ be the generating curves of $\eta_n$ with respect to $\mathcal{F}_n.$ Define $\phi_n$ as follows. Note that $\xi_n\cap\partial\mathcal{F}_n$ consists of points or linear arcs. We define $\zeta_n$ be a generating curve with $\zeta_n\cap\partial\mathcal{F}_n$ to consists of linear arcs, with one of its end point to be the point that converges into a tangency in $\partial\mathcal{G}.$  In addition we also require the arcs on $\partial\mathcal{F}_n$ that with a end points which converges to tangency point be be connect by a arc between the other end points. It is clear that every 
$\mathscr{O}(\eta_n)\cap\mathcal{F}_n$ contains a generating curve of this property. Let $\phi_n$ be the quasi-circle generated by $\zeta_n,$ and set
$\phi=\lim\phi_n$, then $\phi$ contains a loop singularity at a point of tangency (Figure 3). Hence $\phi$ is non-quasi-circle of $\G.$  Since every $\mathscr{O}(\eta_\G)$
contains such a $\phi$ we must have the lemma to be true for this type of singularity.\par
\begin{figure}[ht!]
\centering
\includegraphics[width=35mm]{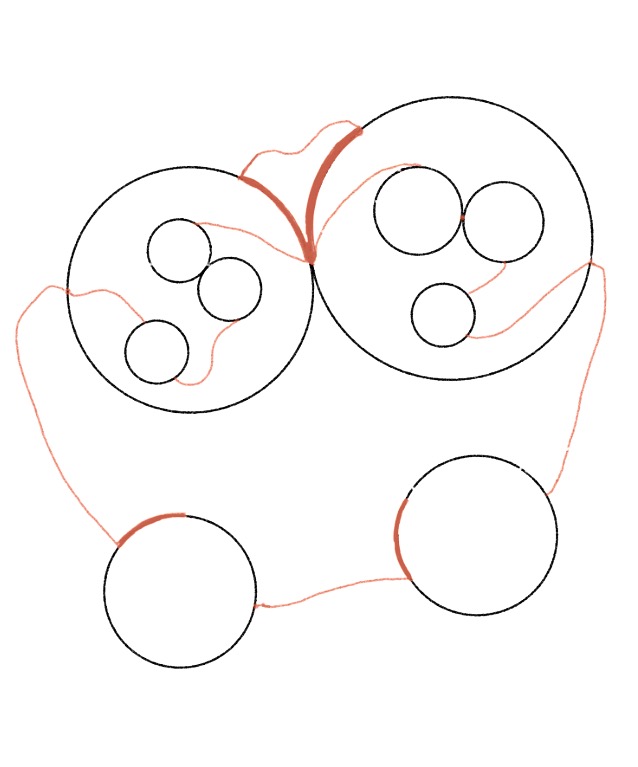}  
\caption{Tangency singularity \label{overflow}}
\end{figure}
\noindent Consider $\partial\mathcal{G}$ contains a {\bf degeneration}:\par
In this case, we must have any quasi-circle $\eta_n$ pass through a degeneration point. Note that we can't have all circles degenerates into a single point. There are two possibilities to have degenerate points. 
Case (A): two circles merge into a single degenerate point $p$. Case (B): A circle degenerates into a point on a circle. \par
Consider (A). In this case, any quasi-circles $\eta_\infty$ will have two possible properties, either there is a point $q$ on some circle of $\partial\mathcal{G}$ such that every quasi-circle
must pass through $q$, or two separate arcs of $\eta_\infty$ meet at $p$. The second possibility implies there exists a loop singularity at $p$, hence $\eta_\infty$ can not be a Jordan curve (Figure 4). Therefore we only have the first possibility for $\eta_\infty.$ But it follows from Proposition \ref{seq-curve}, all rectifiable quasi-circles of $\G$ is the limit
of some sequence of quasi-circles of $\G_n$, and hence all must pass through $q$. However $q$ is not a limit point, hence we can have some quasi-circle not passing through $q,$ a contradiction.\par
\begin{figure}[ht!]
\centering
\includegraphics[width=65mm]{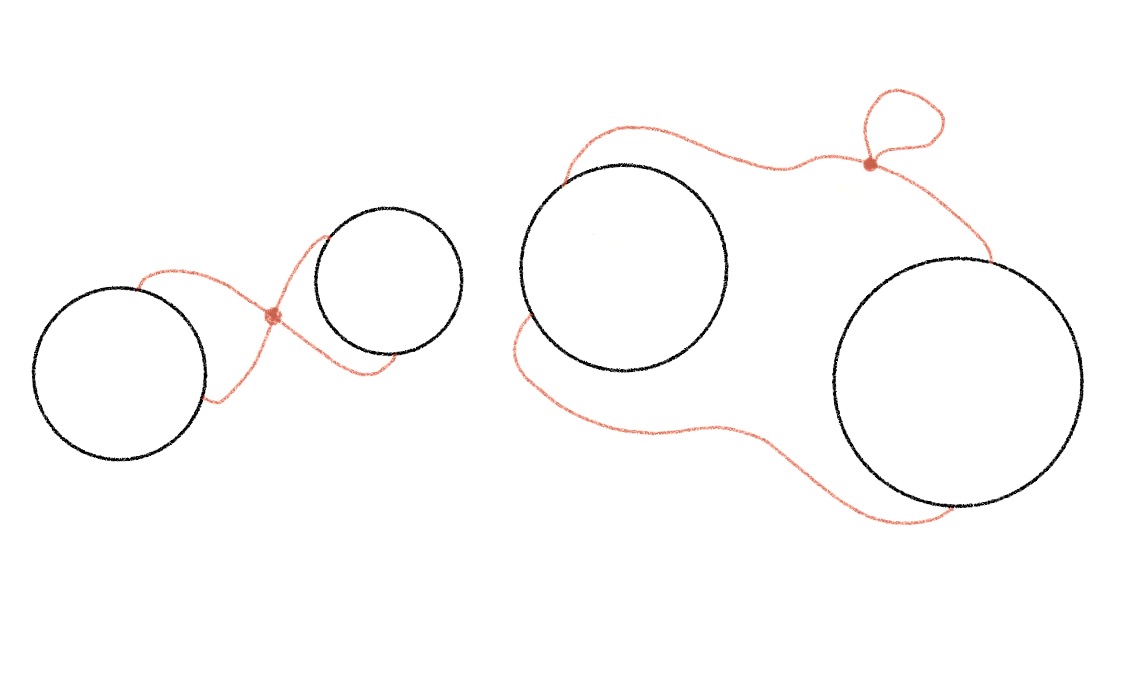}  
\caption{Degenerate singularities \label{overflow}}
\end{figure}

Now consider (B). Here we can assume that there exists at least two circles that do not degenerates into points. Otherwise, we will have the third type (collapsing) singularity, which we will consider next. Having some circles degenerates into a point on to a circle at $p$,  we have a sequence of  quasi-circles $\eta_n$ which passes through $p.$ This is given by the generating curves that have curves connecting the degenerating circle to point $p_n\to p$ converging to linear arc intersecting
orthogonally at boundary. But any neighboring quasi-circle of $\eta_n$  will converges to a $\eta_\infty$ with a loop singularity at $p,$ which is not a Jordan curve
(Figure 5). \par
\begin{figure}[ht!]
\centering
\includegraphics[width=30mm]{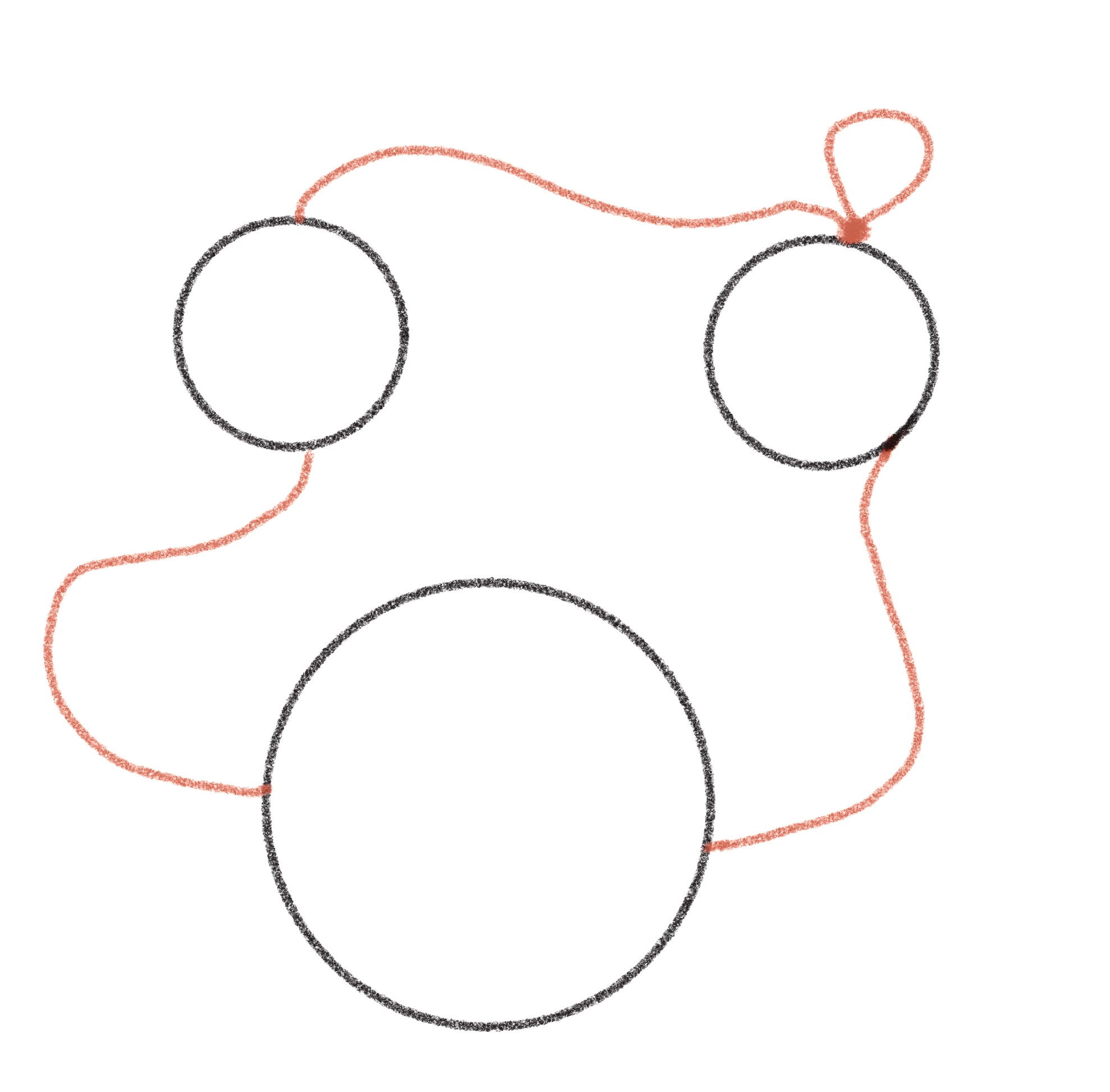}  
\caption{Degenerate singularities \label{overflow}}
\end{figure}

Finally, we note that if we have a degeneration point $p$ which is not a limit point, then there exists a neighboring quasi-circle of $\eta_\G$ that misses the point $p.$ Hence any $\mathscr{O}(\eta_\G)$ will contains some curve which is the limit of a sequence of quasi-circles that do not pass through the point. This gives a non-quasi-circle in $\mathscr{O}(\eta_\G).$\par
\noindent Consider $\partial\mathcal{G}$ contain {\bf collapsing}:\par
Let $\mathcal{C}$ denote the collapsed circle. First suppose that there exists $\g\in\G$ such that it has a fixed point not on 
$\mathcal{C}.$ Then there are infinitely many elements with fixed points not on $\mathcal{C}.$ Let $\g_n\in\G_n$ with $\g_n\to\g.$  
Since $\g_n(\mathcal{F}_n)\to\g(\mathcal{G})=\g(\mathcal{C}),$ we must have $\g(\mathcal{C})$ either identical or disjoint. Suppose 
that not all fixed points of elements of $\G$ is contained in $\mathcal{C}.$ Then we have infinitely many fixed points not in $\mathcal{C}.$ Take three points $a,b,c$ fixed points of elements of $\G$ with $a\in\mathcal{C}$ and 
$b,c\not\in\mathcal{C}.$ For sufficiently large $n$, we must have some $\beta_n\in\G_n$ such that $a,b,c$ is contained in three distinct disk of the complement of $\beta_n(\mathcal{F}_n).$ This follows from the fact that the orbit $\mathcal{F}_{\G_n}$ of $\mathcal{F}_n$ will have images with disks converging to fixed points, and
since they are distinct fixed points, we must have some disks that only contain one of the points only. Since $\beta_n(\mathcal{F}_n)\to\beta(\mathcal{C}),$ so converges to a circle. But $a,b,c$ are contained in distinct disks bounded by circles of $\partial\beta_n(\mathcal{F}_n)$ for all large $n$, which implies $a,b,c$ must lies on 
$\beta(\mathcal{C}).$ Hence $\beta(\mathcal{C})\cap\mathcal{C}\not=\emptyset$, but they are not identical circles, which is a contradiction. Hence all fixed points of elements of 
$\G$ are $\subset\mathcal{C},$ which implies $\Lambda_\G\subset\mathcal{C}.$ Since $\G$ is Schottky group, we must have $\G$ is Fuchsian group of second kind.
By \cite{Button}, $\G$ is classical Schottky group, a contradiction.

\end{proof}

\section{Proof of Theorem \ref{main}}
\begin{proof}(Theorem \ref{main})\vspace{0.2cm}\\
We proof by contradiction. Suppose that $H_c<1.$ \par
First note that by Selberg Lemma we can just assume Kleinian group to be torsion-free.\par
Now note that if a Kleinian group $\G$ of $\mathfrak{D}_\G<1$ then it must be free. To show this, assume otherwise. Since $\mathfrak{D}_\G<1$ and $\G$ is purely loxodromic, it is convex-cocompact of second-kind. There exists an imbedded surface 
$\mathscr{R}=\Omega_\G/\G$ in $\mathbb{H}^3/\G.$ If $\mathscr{R}$ is incompressible then, subgroup $\pi_1(\mathscr{R})\subset\G$ have $\mathfrak{D}_{\pi_1(\mathscr{R})}=1$ 
which is contradiction. If $\mathscr{R}$ is compressible then, we can cut along compression disks. We either end with incompressible surface as before or after finitely many steps of cutting we obtain topological ball, which implies $\mathbb{H}^3/\G$ is handle-body, hence free. \par

Let $\{\G_n\}\subset\mathfrak{H}$, and $\G_n\to\G$, with $\G$ a Schottky group. It follows from Lemma \ref{open}, that there exists an open set $\mathscr{O}(\eta_\G)$
such that every element is quasi-circle of $\G$. Now if $\G$ is non-classical Schottky group then by Lemma \ref{singular}, we must have every open set
contains some non-quasi-circle, in-particular we must have $\mathscr{O}(\eta_\G)$ contain a non-quasi-circle. But this gives a contradiction, hence we must
have $\G$ is a classical Schottky group.\par
Finally, sharpness comes from the fact that, there exists Kleinian groups which is not free of Hausdorff dimension equal to one. Hence we have our result.
\end{proof}
\text{}\\
E-mail: yonghou@princeton.edu
\pdfbookmark[1]{Reference}{Reference}
\bibliographystyle{plain}

\end{document}